\documentclass[a4paper,  centertags, oneside,12pt]{amsart}
\usepackage{mathrsfs}
\usepackage{appendix}
\usepackage{amssymb}
\usepackage{fancyhdr}
\usepackage{charter}
\usepackage{typearea}
\usepackage{pdfsync}
\usepackage[colorlinks,linkcolor=blue,anchorcolor=blue,citecolor=green]{hyperref}
\usepackage{amsmath,amstext,amsthm,amscd}
\usepackage{dsfont}
\usepackage{mathrsfs}
\usepackage[a4paper,top=3cm,bottom=3cm,left=2.5cm,right=2.5cm]{geometry}

\usepackage{enumitem}
\usepackage{color}
\setlist[1]{itemsep=5pt}
\newcommand{\comment}[1]{}

\makeatletter
      \def\@setcopyright{}
      \def\serieslogo@{}
      \makeatother



\newtheorem{theorem}{Theorem}[section]
\newtheorem{lemma}[theorem]{Lemma}
\newtheorem{corollary}[theorem]{Corollary}

\newtheorem{definition}[theorem]{Definition}

\newtheorem{assumption}[theorem]{Assumption}
\numberwithin{equation}{section}
\DeclareMathOperator{\dive}{div}

\hyphenation{de-ge-ne-rate}
\hyphenation{diffe-ren-tial}\hyphenation{mani-fold}\hyphenation{mani-folds}

\chead{}
\rhead{\itshape{}}\cfoot{\thepage}
\fancyhead[CO]{}
 \begin{document}
\title{Contraction property on complex hyperbolic ball}

\author{Xiaoshan Li }
\address{School of Mathematics and Statistics, Wuhan University
, Wuhan, Hubei 430072, China}
\email{xiaoshanli@whu.edu.cn}
\author{Guicong Su}
\address{School of Mathematics and Statistics, Wuhan University of Technology, Wuhan, Hubei 430070, China}
\email{suguicong@whut.edu.cn}

\begin{abstract}
{We prove an isoperimetric inequalitie on the complex hyperbolic ball with  Assumption \ref{assumption}}. As an application, we prove a contraction property for the holomorphic functions in Hardy and weighted Bergman spaces on the complex hyperbolic ball with this assumption. The results can be seen as partial generalization of Kulikov's result on the complex hyperbolic plane.
\end{abstract}
\maketitle
\section{{Introduction}}
Let $\mathbb{B}_n=\{z\in\mathbb{C}^n: \vert z\vert<1\}$ be the unit ball in $\mathbb{C}^n$.
The boundary of $\mathbb{B}_n$ will be denoted by $\mathbb{S}_n$. Let $ dv$ denote the Euclidean volume measure on $\mathbb{B}_n$, normalized so that $v(\mathbb{B}_n)=1$.
The surface measure on $\mathbb{S}_n$ will be denoted by $d\sigma$.

In this paper we work with several holomorphic function spaces defined on the unit ball $\mathbb{B}_n$. We begin with the definitions of the appropriate Hardy and weighted Bergman spaces (refer to Zhao-Zhu \cite{Z-Z} and Zhu \cite{Zhu}).
\begin{definition}
For $0<p<\infty$ the Hardy space $H^p$ consists of holomorphic functions $f$ in $\mathbb{B}_n$ such that
$$\Vert f\Vert_{H^p}^p: =\sup_{0<r<1}\int_{\mathbb{S}_n}\vert f(r\zeta)\vert^p d\sigma(\zeta)<\infty.$$
\end{definition}

We denote by $d v_g$ the hyperbolic volume measure with respect to the Bergman metric on  $\mathbb{B}_n$, that is,
$$d v_g(z)=\frac{d v(z)}{(1-\vert z\vert^2)^{n+1}}.$$

\begin{definition}
For $0<p<\infty$ and $\alpha>n$ the weighted Bergman space $A_{\alpha}^p$ consists of holomorphic functions $f$ in $\mathbb{B}_n$ such that
$$\Vert f\Vert_{A_\alpha^p}^p=\int_{\mathbb{B}_n}c_\alpha\vert f(z)\vert^p(1-\vert z\vert^2)^\alpha d v_g(z)<\infty,$$
where $c_\alpha=\frac{\Gamma(\alpha)}{n!\Gamma(\alpha-n)}.$
\end{definition}

Note that for the function $f(z)\equiv 1$ we have $\Vert f\Vert_{H^p}=\Vert f\Vert_{A_\alpha^q}=1$ for all admissible values of $p$, $q$ and $\alpha$.

An important property of these spaces is that point evaluations are continuous
in them. Specifically, for all holomorphic functions $f\in A_\alpha^p$ we have
\begin{equation}\label{berg}
\vert f(z)\vert^p(1-\vert z\vert^2)^\alpha\leq \Vert f\Vert_{A_\alpha^p}^p,
\end{equation}
and  for all holomorphic functions $f\in H^{p}$ we have
\begin{equation}\label{hardy}
\vert f(z)\vert^p(1-\vert z\vert^2)^n\leq \Vert f\Vert^p_{H^p}.
\end{equation}
In particular, when $p=nr$ and $f\in H^{nr}$ we have
$$|f(z)|^r(1-|z|^2)\leq \|f\|^r_{H^{nr}}.$$
Since polynomials are dense in all of these spaces, for each fixed function $f$ in one of these spaces, the quantities on left-hand sides of (\ref{berg}) and (\ref{hardy}) tend uniformly to $0$ as $|z|\rightarrow 1$.

On the other hand, we have
\begin{equation*}
A_\alpha^p\subset A_\beta^q, \ \ \ \frac{p}{\alpha}=\frac{q}{\beta}=r, \ \ \ p<q
\end{equation*}
and $H^{nr}$ is contained in all these spaces.
Moreover, the $H^{nr}$-norm can be evaluated as the limit of these weighted Bergman norms in the sense that for $f\in H^{nr}$ we have
$$\Vert f\Vert_{H^{nr}}=\lim_{\alpha\rightarrow n^{+}} \Vert f\Vert_{A_\alpha^{\alpha r}}.$$

For $n=1$, it was asked by Lieb and Solovej \cite{LS21} whether these embeddings are actually contractions, that is, whether the norm $\Vert f\Vert_{A_\alpha^{r\alpha}}$ is decreasing in $\alpha$. They proved that the Wehrl-type entrop conjecture for the $SU(1, 1)$ group can be deduced from such contraction. In the case of contractions from the Hardy spaces to Bergman spaces, it was asked by Pavlovic \cite{Pa14} and by Brevig, Ortega-Cerda, Seip, and Zhao \cite{BCSZ18} concerning the coefficients estimates for holomorphic functions. Recently, Kulikov \cite{Kulikov} confirmed these conjectures. A key ingredient of Kulikov's proof is the well-known isoperimetric inequality on the complex hyperbolic  plane \cite{Sch40} (see also \cite{Iz15, Osserman}). However, to our best knowledge, the isoperimetric inequalities on higher dimensional complex hyperbolic space are not known in the literature. First, based on a comparison formula for the total curvature of level sets of functions on Riemann manifolds (see \cite{MJoel, MJoel1}) we have the following isoperimetric inequality on the complex hyperbolic ball.

\begin{theorem}\label{main11}
Let $(\mathbb B_n, g)$ be the complex hyperbolic ball with $g$ the Bergman metric. Let $\Omega\Subset\mathbb B_n$ be a relatively compact subset. Then the following isoperimetric inequality holds.
\begin{equation}
{\rm per}_g(\Omega)^{2n}\geq\frac{{\rm per}(\mathbb B_n)^{2n}}{{\rm vol}(\mathbb B_n)^{2n-1}} {{\rm vol}_g(\Omega)}^{2n-1},
\end{equation}
where ${\rm per}$ stands for the perimeter, ${\rm vol}_g(\cdot)$, ${\rm per}_g(\cdot)$, ${\rm per}(\cdot)$ and ${\rm vol}(\cdot)$ are associated with Bergman metric and Euclidean metric, respectively.
\end{theorem}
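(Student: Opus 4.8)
The plan is to run a higher-dimensional version of the Bol--Fiala argument, with the comparison formula for the total curvature of level sets of \cite{MJoel, MJoel1} playing the role that the Gauss--Bonnet theorem plays in the two-dimensional case. By a routine approximation (smoothing $\partial\Omega$ so that both ${\rm vol}_g$ and ${\rm per}_g$ converge) one reduces to the case of $\Omega$ with smooth boundary. The first point to notice is that $\frac{{\rm per}(\mathbb{B}_n)^{2n}}{{\rm vol}(\mathbb{B}_n)^{2n-1}}$ is exactly the sharp constant in the Euclidean isoperimetric inequality on $\mathbb{R}^{2n}\cong\mathbb{C}^n$, attained by balls, so the assertion is precisely that inequality with the Euclidean volume and perimeter replaced by their Bergman counterparts; and since $g$ is the complex hyperbolic metric, its sectional curvature is negative, so $(\mathbb{B}_n,g)$ is a Cartan--Hadamard manifold and the whole argument will exploit the nonpositivity of its curvature.

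Concretely, I would foliate $\Omega$ by the inner parallel hypersurfaces of $\partial\Omega$: put $\rho(x)={\rm dist}_g(x,\partial\Omega)$ for $x\in\Omega$, $\Sigma_t=\{\rho=t\}$, and let $V(t)={\rm vol}_g(\{\rho>t\})$ and $P(t)={\rm per}_g(\Sigma_t)$, so that $V(0)={\rm vol}_g(\Omega)$, $P(0)={\rm per}_g(\Omega)$ and $V'(t)=-P(t)$ for a.e.\ $t$. Along the normal geodesics of this foliation the shape operator $S(t)$ of $\Sigma_t$ satisfies a Riccati equation $S'=S^2+\mathcal{R}(t)$ whose curvature term $\mathcal{R}(t)$ is nonpositive; feeding this into the comparison formula of \cite{MJoel, MJoel1} controls the family of total curvature integrals $\mathcal{T}_k(t):=\int_{\Sigma_t}\sigma_k(\kappa_1,\dots,\kappa_{2n-1})\,d\sigma_g$ of the principal curvatures of $\Sigma_t$. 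Each $\mathcal{T}_k$ is monotone along the foliation in the direction dictated by $\mathcal{R}\le 0$; in particular the top one $\mathcal{T}_{2n-1}(t)=\int_{\Sigma_t}\det S(t)\,d\sigma_g$ is bounded below by its Euclidean model value, the total Gauss--Kronecker curvature of a round sphere, which is scale invariant and equals ${\rm per}(\mathbb{B}_n)$.

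One then closes the argument with the elementary inequalities among the symmetric functions $\sigma_k$ (Newton, Maclaurin) together with the Steiner-type evolution equations relating $\frac{d}{dt}\mathcal{T}_k$ to $\mathcal{T}_{k+1}$ up to a sign-definite curvature correction, exactly as in \cite{MJoel, MJoel1}. This converts the estimates on $P'(t)=-\mathcal{T}_1(t)$, $P(t)$ and $\mathcal{T}_{2n-1}(t)$ into a first-order differential inequality for $P$ and $V$ which is precisely the one satisfied by concentric Euclidean balls, all intermediate inequalities degenerating to equalities in the flat limit. Integrating this inequality from $t=0$ up to the inradius of $\Omega$, where $P$ and $V$ tend to $0$, and using $V'=-P$, yields ${\rm per}_g(\Omega)^{2n}\ge\frac{{\rm per}(\mathbb{B}_n)^{2n}}{{\rm vol}(\mathbb{B}_n)^{2n-1}}{\rm vol}_g(\Omega)^{2n-1}$; since the curvature of $(\mathbb{B}_n,g)$ is strictly negative the inequality is in fact strict unless $\Omega$ is trivial, consistent with the known sharp isoperimetric inequality in complex dimension one.

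The main obstacle is not the formal manipulation but the geometry of the foliation. The function $\rho$ is only Lipschitz, and the parallel hypersurfaces $\Sigma_t$ degenerate along the cut locus of $\partial\Omega$ in $\Omega$; one must check that the differential inequality for $(P,V)$ survives integration across this (rectifiable, null) set, i.e.\ that the curvature integrals $\mathcal{T}_k$ behave monotonically when $\Sigma_t$ crosses a singularity --- the analogue of the delicate step already present in the classical Bol--Fiala theorem. Moreover, when $\Omega$ is not convex the shape operator $S(t)$ loses positivity near $\partial\Omega$, so Newton's and Maclaurin's inequalities and the sign of the curvature term in the Riccati equation are no longer immediate; handling this is exactly what the comparison formula of \cite{MJoel, MJoel1} is designed for, and it is here that the specific structure of the complex hyperbolic ball --- a rank-one K\"ahler symmetric space whose geodesic spheres are totally umbilic outside a single principal direction --- has to be used in order to obtain the inequality with the sharp Euclidean constant for \emph{every} relatively compact $\Omega$.
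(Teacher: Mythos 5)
Your strategy does not match the paper's, and it contains a gap that cannot be closed in the form you have stated it. The paper does \emph{not} run a Bol--Fiala type argument along inner parallel hypersurfaces of $\Omega$. Instead it invokes \cite[Thm.~7.1]{MJoel} (Theorem~\ref{MJoel}), which already reduces the Cartan--Hadamard isoperimetric inequality \eqref{con} to a single pointwise condition: the total Gauss--Kronecker curvature of every compact \emph{convex} $\mathcal{C}^{1,1}$ hypersurface must be at least $\mathrm{vol}(\mathbf{S}^{2n-1})$. So the entire task is to verify this lower bound for convex hypersurfaces, not to re-integrate a differential inequality for $(P,V)$. The paper then takes a convex exhaustion function $u$ for the convex domain bounded by $\Gamma$ (citing \cite{B02}), applies the comparison formula Theorem~\ref{comparison}, and observes the crucial structural fact: in the curvature frame \eqref{curvature} of complex hyperbolic space one has $R_{i_{2n-1},\,i_{2n-2},\,i_{2n-1},\,2n}(0)=0$, so the \emph{second} integral term in the comparison formula vanishes identically. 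The remaining term is $-\int \sum \kappa^u_{i_1}\cdots\kappa^u_{i_{2n-2}} K_{i_{2n-1},2n}\,d\mu$, which is $\geq 0$ because the $\kappa^u_j\geq 0$ (convexity of $u$) and $K_{ij}<0$. Letting the inner hypersurface shrink to a point and using Lemma~\ref{geodesic} then yields $\mathcal{G}(\Gamma)\geq \mathrm{vol}(\mathbf{S}^{2n-1})$.

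The gap in your argument is the assertion that ``each $\mathcal{T}_k$ is monotone along the foliation in the direction dictated by $\mathcal{R}\le 0$.'' That is not true in a general Cartan--Hadamard manifold: the Ghomi--Spruck comparison formula (Theorem~\ref{comparison}) has a second term involving $|\nabla u|_{i_{n-2}}\,R_{i_{n-1},i_{n-2},i_{n-1},n}$ whose sign is \emph{not} controlled by nonpositive sectional curvature alone. This is exactly why the Cartan--Hadamard conjecture is still open in dimensions $\geq 5$, and why Theorem~\ref{MJoel} carries the total-curvature bound as a \emph{hypothesis} rather than a consequence of $K\le 0$. If monotonicity of $\mathcal{T}_{2n-1}$ followed from $\mathcal{R}\le 0$ alone, your argument would prove the full Cartan--Hadamard conjecture, which it does not. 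The special fact you need --- and must state and verify --- is that the off-diagonal cross-term of the comparison formula vanishes for the complex hyperbolic metric, which follows from the explicit curvature identities \eqref{curvature}; your appeal to ``geodesic spheres are totally umbilic outside a single principal direction'' is a related but different fact and does not by itself annihilate the second term. A secondary issue: applying the Riccati/comparison machinery to the parallel hypersurfaces $\Sigma_t$ of an arbitrary $\Omega$ runs into loss of convexity, which you note but do not resolve; the paper avoids this entirely by reducing to convex $\Gamma$ via Theorem~\ref{MJoel} and choosing a convex $u$.
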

It was asked by Pansu that which compact domains in complex hyperbolic space minimize boundary volume among the domains of equal volume (see \cite{Mc03}). This problem is still open now. A domain in the complex hyperbolic ball with this minimizing property is called an isoperimetric region. It is well-known that the isoperimetric region in the complex hyperbolic ball always exist, see\cite[Section 4]{Ri23}. It is very likely that the isoperimetric region in the complex hyperbolic ball is a geodesic ball. {So, we make following isoperimetric assumption first.
\begin{assumption}\label{assumption}
Assume the isoperimetric region in the hyperbolic ball is a geodesic ball.
\end{assumption}}

With the above Assumption \ref{assumption}, we obtained another isoperimetric inequality on the complex hyperbolic ball.

\begin{theorem}\label{Iso1}
Under the Assumption \ref{assumption}, let $\Omega$ be a relatively compact subset  in  the complex hyperbolic ball $(\mathbb{B}_n, g)$. Then we have
\begin{equation*}
(\mathrm{per}_g(\Omega))^2\geq 4n^2 (\mathrm{vol}_g(\Omega))^{\frac{2n-1}{n}}+4n^2 (\mathrm{vol}_g(\Omega))^2.
\end{equation*}
\end{theorem}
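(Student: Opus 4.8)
The plan is to reduce the inequality to the case of a geodesic ball, where both sides can be written down explicitly, and then to observe that for geodesic balls the asserted inequality is in fact an equality.

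First I would use that the right-hand side depends only on $\mathrm{vol}_g(\Omega)$. Given a relatively compact $\Omega$ (which we may assume has smooth boundary, or at least finite $g$-perimeter) with $V:=\mathrm{vol}_g(\Omega)\in(0,\infty)$, pick a geodesic ball $B$ with $\mathrm{vol}_g(B)=V$ — such a ball exists because, by the computation below, the $g$-volumes of geodesic balls fill out all of $(0,\infty)$. By the known existence of isoperimetric regions on $(\mathbb B_n,g)$ (referenced in the introduction) together with Assumption \ref{assumption}, this ball $B$ minimizes $g$-perimeter among all sets of $g$-volume $V$, so $\mathrm{per}_g(\Omega)\ge\mathrm{per}_g(B)$; and since $\mathrm{Aut}(\mathbb B_n)$ acts transitively by $g$-isometries preserving both $\mathrm{vol}_g$ and $\mathrm{per}_g$, we may take $B$ centered at the origin. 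It therefore suffices to prove that for every origin-centered geodesic ball $B$ one has $\mathrm{per}_g(B)^2=4n^2\,\mathrm{vol}_g(B)^{(2n-1)/n}+4n^2\,\mathrm{vol}_g(B)^2$.

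Next I would compute the two sides for $B=\{|z|<\rho\}$, $0<\rho<1$ — the origin-centered geodesic balls are exactly these Euclidean balls, since the $g$-geodesics through $0$ are the Euclidean radii. Passing to polar coordinates and using $v(\{|z|<\rho\})=\rho^{2n}$ together with $\frac{d}{ds}\big(s^{2n}(1-s^2)^{-n}\big)=2n\,s^{2n-1}(1-s^2)^{-n-1}$, one gets
\[
\mathrm{vol}_g(B)=\int_{\{|z|<\rho\}}\frac{dv}{(1-|z|^2)^{n+1}}=\Big(\frac{\rho^2}{1-\rho^2}\Big)^{n}.
\]
For the perimeter I would use that $\mathrm{per}_g$ is the Minkowski content of $\mathrm{vol}_g$, so that $\mathrm{per}_g(\partial B)=\frac{d}{dt}\mathrm{vol}_g(B_t)$ along the $g$-distance $t$; since the $g$-distance from $0$ to $z$ is $t=\operatorname{arctanh}|z|$ (whence $\frac{dt}{d\rho}=(1-\rho^2)^{-1}$), this yields $\mathrm{per}_g(\partial B)=(1-\rho^2)\frac{d}{d\rho}\mathrm{vol}_g(B)=2n\,\rho^{2n-1}(1-\rho^2)^{-n}$. (Equivalently, in geodesic polar coordinates $\mathrm{vol}_g(B_t)=\sinh^{2n}t$ and $\mathrm{per}_g(\partial B_t)=2n\sinh^{2n-1}t\cosh t$.) Writing $V=\mathrm{vol}_g(B)$, so that $V^{1/n}=\rho^2/(1-\rho^2)$ and hence $\rho^{-2}=1+V^{-1/n}$, a one-line computation then gives
\[
\mathrm{per}_g(B)^2=\frac{4n^2\,\rho^{4n-2}}{(1-\rho^2)^{2n}}=4n^2\Big(\frac{\rho^2}{1-\rho^2}\Big)^{2n}\rho^{-2}=4n^2V^2\big(1+V^{-1/n}\big)=4n^2V^{(2n-1)/n}+4n^2V^2,
\]
the desired identity; together with the reduction step this proves the theorem, with equality precisely for geodesic balls.

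The hard part is the reduction to geodesic balls: the entire content of the theorem is absorbed into Assumption \ref{assumption}, and once isoperimetric regions are known to be geodesic balls the remainder is the elementary computation above. The one genuinely delicate point is to keep the normalizations of $\mathrm{vol}_g$ and $\mathrm{per}_g$ consistent — $\mathrm{vol}_g$ being the integral of $(1-|z|^2)^{-(n+1)}dv$ against the volume measure with $v(\mathbb B_n)=1$, and $\mathrm{per}_g$ the compatibly normalized weighted perimeter — so that the constants coming from the volume and perimeter of geodesic balls line up to produce exactly $4n^2$ in both terms; this normalization is also what makes the present theorem agree with, and refine, Theorem \ref{main11}.
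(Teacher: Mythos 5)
Your proof is correct, and it takes a genuinely different (and much more direct) route than the paper. The paper deduces Theorem \ref{Iso1} from the $p=1$ case of the Sobolev-type inequality in Theorem \ref{sobolev}, which in turn is obtained by (a) proving a P\'olya--Szeg\H{o} inequality (Theorem \ref{PS}(iii), where Assumption \ref{assumption} enters through the comparison $\mathrm{Area}_g(\Gamma_t^\sharp)\le\mathrm{Area}_g(\Gamma_t)$), (b) writing $\int|\nabla_g u_g^\sharp|_g^p\,dv_g$ explicitly as $(2n)^p\int_0^\infty|(u^*)'(s)|^p s^{(2n-1)p/(2n)}(1+s^{1/n})^{p/2}\,ds$, (c) splitting off a Euclidean-rearrangement piece and invoking the sharp Euclidean Sobolev inequality together with a weighted Hardy inequality, and finally (d) letting $p\to 1^+$. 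You instead observe that Assumption \ref{assumption}, combined with the existence of isoperimetric regions on $(\mathbb B_n,g)$, immediately reduces the statement to origin-centered geodesic balls, where $\mathrm{vol}_g(B_\rho)=\sinh^{2n}t=(\rho^2/(1-\rho^2))^n$ and $\mathrm{per}_g(\partial B_\rho)=2n\sinh^{2n-1}t\cosh t$ (coarea applied to the radial distance), and the claimed bound becomes the algebraic identity $\mathrm{per}_g^2=4n^2V^2(1+V^{-1/n})$. Your route is shorter and exposes the equality case transparently; what the paper's longer route buys is the family of Sobolev inequalities of Theorem \ref{sobolev} (for $1\le p<2n$ and $p>2n$), which appear to be of independent interest and are not a consequence of your argument. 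One point worth making explicit, which you flag but do not fully pin down: the paper normalizes $dv$ so that $v(\mathbb B_n)=1$, hence $dv_g$ and $d\sigma_g$ are each the Bergman-metric volume and surface measure rescaled by the same constant; this is precisely what makes the coarea identity $\mathrm{per}_g(\partial B_t)=\frac{d}{dt}\mathrm{vol}_g(B_t)$ valid with these normalizations, and hence makes the constants come out to $4n^2$.
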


Making use of the isoperimetric inequality in Theorem \ref{Iso1}, we may extend Kulikov's results for Hardy and weighted Bergman spaces to higher dimensional complex hyperbolic ball $\mathbb B_n$ by proving the following
\begin{theorem}\label{increasing}
Under the Assumption \ref{assumption},
let $G:[0,\infty)\rightarrow\mathbb{R}$ be an increasing function. Then the maximum
value of
\begin{equation}
\int_{\mathbb{B}_n}
G\left(\vert f(z)\vert^r(1-\vert z\vert^2)\right)dv_g(z)
\end{equation}
is attained for $f(z)\equiv 1$, subject to the condition that $f\in H^{nr}$ and $\Vert f\Vert_{H^{nr}}=1$.
\end{theorem}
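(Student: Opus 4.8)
The plan is to reduce the statement to a pointwise comparison of distribution functions and then to establish that comparison by coupling the co-area formula with the isoperimetric inequality of Theorem~\ref{Iso1}. Write $u(z)=|f(z)|^{r}(1-|z|^{2})$, so by (\ref{hardy}) we have $0\le u\le\|f\|_{H^{nr}}^{r}=1$ on $\mathbb{B}_{n}$ with $u\to0$ at $\partial\mathbb{B}_{n}$; for $f\equiv1$ the corresponding quantity is $u_{0}(z)=1-|z|^{2}$. Since $G$ is increasing one may assume $G\ge0$, $G(0)=0$ (subtract $G(0)$; if $G(0)\ne0$ both integrals are infinite), and then the layer-cake formula gives
\[
\int_{\mathbb{B}_{n}}G\bigl(u(z)\bigr)\,dv_{g}(z)=\int_{0}^{\infty}v_{g}\bigl(\{z:u(z)>t\}\bigr)\,dG(t),
\]
and likewise with $u$ replaced by $u_{0}$. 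Hence it is enough to prove the distributional inequality $v_{g}(\{u>t\})\le v_{g}(\{u_{0}>t\})=v_{g}(\{|z|^{2}<1-t\})$ for all $t\in(0,1)$, the right-hand side being the hyperbolic volume of a geodesic ball of $(\mathbb{B}_{n},g)$ centred at the origin.

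Next I would set $v=\log u=r\log|f|+\log(1-|z|^{2})$ and, for $s<s_{0}:=\log(\sup_{\mathbb{B}_{n}}u)\le0$, put $m(s)=v_{g}(\{v>s\})$ and $m_{0}(s)=v_{g}(\{1-|z|^{2}>e^{s}\})=(e^{-s}-1)^{n}$, so the goal is $m\le m_{0}$ (trivial for $s\ge s_{0}$, where $m:=0<m_{0}$). The superlevel set $\Omega_{s}:=\{v>s\}=\{u>e^{s}\}$ is relatively compact in $\mathbb{B}_{n}$ and $\overline{\Omega_{s}}$ misses the zero set of $f$ (there $u=0<e^{s}$); on $\Omega_{s}$ the function $\log|f|$ is therefore pluriharmonic, while $-\log(1-|z|^{2})$ is a Kähler potential of $g$, so $\Delta_{g}v=\Delta_{g}\log(1-|z|^{2})=-\kappa_{n}$ is a negative constant there. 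For a.e.\ $s$ the level $\{v=s\}$ is a smooth hypersurface (Sard), and the co-area formula, the divergence theorem and Cauchy--Schwarz on $\{v=s\}$ give
\[
\mathrm{per}_{g}(\Omega_{s})^{2}\le\Bigl(\int_{\{v=s\}}|\nabla_{g}v|_{g}\,dA_{g}\Bigr)\Bigl(\int_{\{v=s\}}|\nabla_{g}v|_{g}^{-1}\,dA_{g}\Bigr)=\Bigl(-\int_{\Omega_{s}}\Delta_{g}v\,dv_{g}\Bigr)\bigl(-m'(s)\bigr)=\kappa_{n}\,m(s)\bigl(-m'(s)\bigr).
\]
Inserting Theorem~\ref{Iso1} and dividing by $m(s)$ yields the differential inequality
\[
-m'(s)\ \ge\ \frac{4n^{2}}{\kappa_{n}}\Bigl(m(s)^{\frac{n-1}{n}}+m(s)\Bigr)=:G^{\ast}\bigl(m(s)\bigr)\qquad(s<s_{0}).
\]
For $f\equiv1$ the sets $\Omega_{s}$ are geodesic balls, for which Cauchy--Schwarz (rotational symmetry) and Theorem~\ref{Iso1} (Assumption~\ref{assumption}) are equalities, so $-m_{0}'=G^{\ast}(m_{0})$; checking this identity against $m_{0}(s)=(e^{-s}-1)^{n}$ fixes the value of $\kappa_{n}$ so that $G^{\ast}(x)=n\bigl(x^{(n-1)/n}+x\bigr)$.

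To pass from the differential inequality to $m\le m_{0}$ I would invoke the normalisation $\|f\|_{H^{nr}}=1$ through a boundary estimate: as $s\to-\infty$ the mass of $\{u>e^{s}\}$ concentrates near $\partial\mathbb{B}_{n}$, and since $f$ has boundary values $f^{\ast}\in L^{nr}(\mathbb{S}_{n})$ with $\int_{\mathbb{S}_{n}}|f^{\ast}|^{nr}\,d\sigma=\|f\|_{H^{nr}}^{nr}=1$, a computation gives $m(s)\,e^{ns}\to1$, the same leading behaviour as $m_{0}$. With $H(x)=\int_{0}^{x}\frac{dy}{G^{\ast}(y)}=\log\bigl(x^{1/n}+1\bigr)$, the differential inequality reads $\frac{d}{ds}H(m(s))\le-1$ while $\frac{d}{ds}H(m_{0}(s))=-1$ and $H(m_{0}(s))=-s$; integrating from $A$ to $s$ and letting $A\to-\infty$, where $H(m(A))+A\to0$ by the asymptotics, gives $H(m(s))\le-s=H(m_{0}(s))$, hence $m(s)\le m_{0}(s)$ since $H$ is increasing. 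This delivers the distributional inequality, and the theorem follows.

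The hard part will be the last paragraph. Deriving the differential inequality is routine once Theorem~\ref{Iso1} and the Kähler-potential identity are available, but one must (i) verify that $m$ is locally absolutely continuous and that the co-area/divergence identities hold for a.e.\ level, and, more seriously, (ii) establish the sharp boundary asymptotics $m(s)\,e^{ns}\to1$; this is precisely where the Hardy-space hypothesis is used, since the differential inequality by itself only forces a strictly weaker lower bound for $m$ rather than the desired upper bound $m\le m_{0}$.
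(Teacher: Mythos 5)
Your plan correctly identifies the two-step structure (layer-cake reduction to a distributional comparison, then a differential inequality for the superlevel-set measure coming from the isoperimetric bound), and your differential inequality $\tfrac{d}{ds}H(m(s))\le -1$ with $H(x)=\log(x^{1/n}+1)$ is, after the substitution $t=e^s$, exactly the paper's Theorem~\ref{mon} in the case $b=1$: it is equivalent to the monotonicity of $g(t)=t(\mu(t)^{1/n}+1)$. Up to this point you are reconstructing Section~3 of the paper in $\log$-coordinates.

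The genuine gap is in how you close the argument. Since your inequality only says $H(m(s))+s$ is non-increasing in $s$, it provides \emph{lower} bounds on $m$ when integrated toward $s_0$; to turn it into the desired \emph{upper} bound $m(s)\le m_0(s)$ you must supply the one-sided boundary normalisation $\limsup_{s\to-\infty}\bigl(H(m(s))+s\bigr)\le 0$, i.e.\ $\limsup_{t\to 0^+}t^n\mu(t)\le 1$. You state that ``a computation gives $m(s)\,e^{ns}\to1$'' from the $L^{nr}(\mathbb{S}_n)$ boundary values, but you do not perform it, and it is not a routine step: one has to control the region where $|f(\rho\zeta)|$ is not yet close to $|f^*(\zeta)|$, handle radial directions where $f^*(\zeta)=0$, and justify the interchange of limit and integral; you yourself flag this as ``the hard part''. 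Without that lemma the proposal is incomplete. The paper (Theorem~\ref{bound}) avoids these boundary asymptotics entirely: it argues by contradiction, supposing $\mu(t_1)^{1/n}>\tfrac{1}{t_1}-1$ at some point, using the monotonicity of $g$ to propagate $\mu(t)^{1/n}\ge \tfrac{c}{t}-1$ with $c>1$ down to $t\to0$, and then plugging this into the identity $\|f\|^{\alpha r}_{A_\alpha^{\alpha r}}=\alpha c_\alpha\int_0^{t_0}\mu(t)t^{\alpha-1}\,dt$ and taking $\alpha\to n^+$, where $\|f\|_{A_\alpha^{\alpha r}}\to\|f\|_{H^{nr}}=1$ forces $c^n\le1$, a contradiction. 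This is where the normalisation $\|f\|_{H^{nr}}=1$ enters, cleanly and without any delicate Fatou-type boundary analysis. If you want to keep your ODE-comparison formulation, you should replace the unproven asymptotic by this contradiction argument, or else supply a proof of the Tauberian-type statement $\lim_{t\to0^+}t^n\mu(t)=\|f\|_{H^{nr}}^{nr}$ as a separate lemma.

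One minor remark: the quantity you call $\kappa_n$ is explicitly $\Delta_g\log(1-|z|^2)=-4n$ (the paper computes $\Delta_g\log u=-4nb$), so the constant is not a free parameter to be ``fixed'' by consistency with $m_0$; it is determined directly, and matching against $m_0$ is only a sanity check.
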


\begin{theorem}\label{convex}
Under the Assumption \ref{assumption},
let $G:[0,\infty)\rightarrow\mathbb{R}$ be a convex function. Then the maximum
value of
\begin{equation}\label{convex case}
\int_{\mathbb{B}_n}
G\left(\vert f(z)\vert^p(1-\vert z\vert^2)^{\alpha}\right)dv_g(z)
\end{equation}
is attained for $f(z)\equiv 1$, subject to the condition that $f\in A^{p}_\alpha$ and $\Vert f\Vert_{A^{p}_\alpha}=1$.
\end{theorem}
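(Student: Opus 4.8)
The plan is to reduce the statement to a one–variable comparison of distribution functions, driven by the isoperimetric inequality of Theorem~\ref{Iso1}. Write $u(z)=\abs{f(z)}^p(1-\abs z^2)^\alpha$; by \eqref{berg} and $\norm f_{A^p_\alpha}=1$ one has $0\le u\le 1$ on $\mathbb B_n$, with $u=0$ only on $f^{-1}(0)$ (a null set), and $u(z)\to 0$ uniformly as $\abs z\to 1$, so every superlevel set $\Omega_t:=\set{u>t}$, $t\in(0,1)$, satisfies $\Omega_t\Subset\mathbb B_n$. Put $\phi_f(t)=\mathrm{vol}_g(\Omega_t)$ and let $u_1(z)=(1-\abs z^2)^\alpha$, $\phi_1(t)=\mathrm{vol}_g(\set{u_1>t})$ be the corresponding objects for $f\equiv 1$. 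The normalisation gives $\int_0^1\phi_f=\int_{\mathbb B_n}u\,dv_g=c_\alpha^{-1}\norm f_{A^p_\alpha}^p=c_\alpha^{-1}=\int_0^1\phi_1$. After subtracting the finite constant $\lim_{s\to 0^+}G(s)$ (which alters every integral by the same amount) we may assume $G$ continuous with $G(0)=0$, so that $\int_{\mathbb B_n}G(u)\,dv_g=\int_0^1 G'_+(s)\,\phi_f(s)\,ds$ with $G'_+$ the nondecreasing right derivative of $G$, and likewise for $f\equiv 1$. I would then observe that it suffices to prove that $\phi_f-\phi_1$ changes sign at most once on $(0,1)$, passing from $\ge 0$ to $\le 0$: together with $\int_0^1(\phi_f-\phi_1)=0$ and the monotonicity of $G'_+$, a standard rearrangement estimate gives $\int_0^1 G'_+(\phi_f-\phi_1)\,ds\le G'_+(T)\int_0^1(\phi_f-\phi_1)\,ds=0$, where $T$ is the sign–change point, which is exactly the desired inequality $\int_{\mathbb B_n}G(u)\,dv_g\le\int_{\mathbb B_n}G(u_1)\,dv_g$ (and hence, since $f\equiv 1$ is admissible, the maximum is attained there).

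The comparison rests on a differential inequality for $\phi_f$. Fix a regular value $t\in(0,1)$ of $u$ — almost every $t$, by Sard, since $u$ is real–analytic off $f^{-1}(0)$, which also makes $\phi_f$ absolutely continuous. On $\Omega_t$ there are no zeros of $f$, so $\log u=p\log\abs f+\alpha\log(1-\abs z^2)$ is smooth and, $\log\abs f$ being pluriharmonic there, $\Delta_g\log u=\alpha\,\Delta_g\log(1-\abs z^2)=-\alpha\lambda$, where $\lambda=-\Delta_g\log(1-\abs z^2)>0$ is a constant ($i\pr\ddbar\log(1-\abs z^2)$ being a constant negative multiple of the Bergman form). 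The divergence theorem on $\Omega_t$ then gives $\int_{\pr\Omega_t}\abs{\nabla_g\log u}\,d\mathcal H_g=\alpha\lambda\,\phi_f(t)$ (the normal derivative of $\log u$ on $\pr\Omega_t$ being $-\abs{\nabla_g\log u}$), the coarea formula gives $-\phi_f'(t)=\int_{\pr\Omega_t}\bigl(t\abs{\nabla_g\log u}\bigr)^{-1}d\mathcal H_g$, and Cauchy--Schwarz on $\pr\Omega_t$ combines these into $\mathrm{per}_g(\Omega_t)^2\le\bigl(-t\phi_f'(t)\bigr)\,\alpha\lambda\,\phi_f(t)$. Inserting Theorem~\ref{Iso1}, $\mathrm{per}_g(\Omega_t)^2\ge 4n^2\phi_f(t)^{(2n-1)/n}+4n^2\phi_f(t)^2$, and dividing by $\phi_f(t)$ yields
\[
-\phi_f'(t)\ \ge\ \frac{4n^2}{\alpha\lambda\,t}\bigl(\phi_f(t)^{(n-1)/n}+\phi_f(t)\bigr)\qquad\text{for a.e. }t\in(0,1).
\]
The key remark is that the same computation for $f\equiv 1$ is an equality at every step: the superlevel sets of the radial function $u_1$ are geodesic balls centred at the origin, for which Theorem~\ref{Iso1} is sharp under Assumption~\ref{assumption}, and $\abs{\nabla_g\log u_1}$ is constant on each centred sphere, so Cauchy--Schwarz is an equality; hence $\phi_1$ solves the corresponding ODE with equality and the same $\lambda$.

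With $F(t,y)=\tfrac{4n^2}{\alpha\lambda t}(y^{(n-1)/n}+y)$, which is strictly increasing in $y\ge 0$, the above says that $\phi_f$ is a subsolution and $\phi_1$ a solution of $y'=-F(t,y)$. The final step is a \emph{backward propagation} for this first–order comparison: if $\phi_f(t_1)>\phi_1(t_1)$ for some $t_1\in(0,1)$, then $\phi_f>\phi_1$ on all of $(0,t_1)$. Indeed, otherwise take the largest $t_*<t_1$ with $\phi_f(t_*)=\phi_1(t_*)$; on $(t_*,t_1]$ one would have $\phi_f>\phi_1>0$, hence $(\phi_f-\phi_1)'\le -\bigl(F(t,\phi_f)-F(t,\phi_1)\bigr)<0$ there, forcing $\phi_f-\phi_1<0$ immediately to the right of $t_*$, a contradiction. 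Therefore $\set{t\in(0,1):\phi_f(t)>\phi_1(t)}$ is either empty or an initial interval $(0,T)$; in the latter case $\phi_f\le\phi_1$ on $[T,1)$, so $\phi_f-\phi_1$ changes sign once from $\ge 0$ to $\le 0$, while if it is empty then $\int_0^1(\phi_f-\phi_1)=0$ forces $\phi_f=\phi_1$. Either way the reduction of the first paragraph applies.

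I expect the main obstacle to be the differential inequality together with its saturation by $f\equiv 1$: one must pin down the metric and volume normalisations so that the single constant $\lambda$ governs both $\phi_f$ and $\phi_1$, justify that $\phi_f$ is absolutely continuous and that the divergence and coarea formulas are valid on $\pr\Omega_t$ for almost every $t$ (here the relative compactness $\Omega_t\Subset\mathbb B_n$ and the real–analyticity of $u$ off $f^{-1}(0)$ are used), and verify that the right side of Theorem~\ref{Iso1} is exactly the squared perimeter of a geodesic ball as a function of its volume, so that the $f\equiv 1$ computation is indeed an equality. A secondary technical point is to make precise the identity $\int_{\mathbb B_n}G(u)\,dv_g=\int_0^1 G'_+\,\phi_f$ and the ensuing rearrangement inequality, taking care of the degenerate cases $G'_+(0^+)=-\infty$ and $\int_{\mathbb B_n}G(u)\,dv_g=-\infty$, in which the conclusion still follows from the single–sign–change structure and the equality of the two integrals of $\phi_f$ and $\phi_1$.
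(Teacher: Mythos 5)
Your proposal is correct and is essentially the paper's argument, unpacked: the paper cites Theorem~\ref{mon} (which is exactly the differential inequality $-\mu'(t)\ge\frac{n}{\alpha t}\bigl(\mu^{(n-1)/n}+\mu\bigr)$ you derive from Cauchy--Schwarz, the divergence theorem for $\Delta_g\log u=-4n\alpha$ and Theorem~\ref{Iso1}, repackaged as the monotonicity of $t^{1/\alpha}(\mu^{1/n}+1)$) and then invokes Lemma~\ref{lem5.1}, whose proof is precisely your single--sign--change--plus--rearrangement estimate. The one small detour is that your backward ODE comparison requires separately verifying that $\phi_1$ saturates the differential inequality (equality in Theorem~\ref{Iso1} for geodesic balls and in Cauchy--Schwarz on centred spheres), whereas the paper's formulation makes the reference case immediate from the explicit formula $\mu_1(t)=(t^{-1/\alpha}-1)^n$, which gives $t^{1/\alpha}(\mu_1^{1/n}+1)\equiv 1$ directly.
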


Applying these theorems to the convex and increasing function $G(t) = t^s, t>0$ with $s > 1$, we
get that all the embeddings above between Hardy and weighted Bergman spaces are contractions.

\begin{corollary}\label{cor}
Under the Assumption \ref{assumption},
for all $0<p<q<\infty$ and $n<\alpha<\beta<\infty$ with $\frac{p}{\alpha}=\frac{q}{\beta}=r$. For
all functions $f$ holomorphic in $\mathbb{B}_n$ we have
$$\Vert f\Vert_{A_\beta^q}\leq \Vert f\Vert_{A_\alpha^p}\leq \Vert f\Vert_{H^{nr}}$$
with equality for $f(z)\equiv c$ for  some constant $c$.
\end{corollary}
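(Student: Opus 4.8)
The plan is to deduce Corollary \ref{cor} directly from Theorems \ref{increasing} and \ref{convex} by specializing the auxiliary function $G$ to a power function, exactly as indicated in the paragraph preceding the statement. Since the norms involved are non-negative and positively homogeneous, I would first dispose of the trivial cases (namely $f\equiv 0$, or a right-hand side equal to $+\infty$) and then, after dividing $f$ by a positive constant, reduce to proving the two normalized inequalities separately: that $\|f\|_{A_\alpha^p}\le 1$ whenever $\|f\|_{H^{nr}}=1$, and that $\|f\|_{A_\beta^q}\le 1$ whenever $\|f\|_{A_\alpha^p}=1$.

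For the Hardy-into-Bergman inequality I would use that $\alpha>n\ge 1$, so $G(t)=t^{\alpha}$ is increasing on $[0,\infty)$, and invoke Theorem \ref{increasing}: the maximum of $\int_{\mathbb{B}_n}\bigl(|f(z)|^{r}(1-|z|^2)\bigr)^{\alpha}\,dv_g(z)$ over the unit sphere of $H^{nr}$ is attained at $f\equiv 1$. Since $p=\alpha r$, the integral on the left equals $c_\alpha^{-1}\|f\|_{A_\alpha^p}^{p}$, whereas its value at $f\equiv 1$ equals $c_\alpha^{-1}\|1\|_{A_\alpha^p}^{p}=c_\alpha^{-1}$ because $\|1\|_{A_\alpha^p}=1$; this yields $\|f\|_{A_\alpha^p}\le\|f\|_{H^{nr}}$. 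For the Bergman-into-Bergman inequality I would set $s:=\beta/\alpha$, which is $>1$ because $\beta>\alpha$, so that $G(u)=u^{s}$ is convex on $[0,\infty)$, and apply Theorem \ref{convex} to $\int_{\mathbb{B}_n}\bigl(|f(z)|^{p}(1-|z|^2)^{\alpha}\bigr)^{s}\,dv_g(z)$. The relations $ps=\alpha r\cdot(\beta/\alpha)=\beta r=q$ and $\alpha s=\beta$, both consequences of $p/\alpha=q/\beta=r$, identify this integral with $c_\beta^{-1}\|f\|_{A_\beta^q}^{q}$ and its value at $f\equiv 1$ with $c_\beta^{-1}\|1\|_{A_\beta^q}^{q}=c_\beta^{-1}$, so that $\|f\|_{A_\beta^q}\le\|f\|_{A_\alpha^p}$. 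Chaining the two bounds gives the stated inequality, and for a constant function $f\equiv c$ all three norms equal $|c|$ since $\|1\|$ equals $1$ in each of the three spaces, so equality holds throughout.

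I expect no real obstacle at this stage: the argument is essentially bookkeeping with the exponents ($p=\alpha r$, $q=\beta r$, $s=\beta/\alpha>1$) together with the elementary observation that the power functions chosen for $G$ are increasing, respectively convex, on $[0,\infty)$ in the relevant range. The substantive content has already been absorbed into Theorems \ref{increasing} and \ref{convex}, and ultimately into the isoperimetric inequality of Theorem \ref{Iso1} (hence into Assumption \ref{assumption}). The only point that needs a line of care is the reduction to the normalized case for an arbitrary holomorphic $f$: one checks that $\|f\|_{H^{nr}}<\infty$ forces $f\in H^{nr}$ and, via the first inequality, $f\in A_\alpha^p$, while $\|f\|_{A_\alpha^p}<\infty$ forces $f\in A_\alpha^p$, so that Theorems \ref{increasing} and \ref{convex} genuinely apply to the rescaled functions; when a right-hand side is infinite the corresponding inequality is automatic.
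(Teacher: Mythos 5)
Your proof is correct and follows exactly the route the paper indicates in the one sentence preceding Corollary \ref{cor}: specialize $G(t)=t^{s}$ with $s>1$ in Theorems \ref{increasing} and \ref{convex} (namely $s=\alpha>n\ge 1$ for the Hardy-to-Bergman step and $s=\beta/\alpha>1$ for the Bergman-to-Bergman step). You fill in the exponent bookkeeping and the normalization/trivial-case reduction that the paper leaves implicit, but the argument is the same.
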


Kalaj \cite{Kalaj} proved a contraction property in the higher dimensional unit ball for certain class of smooth functions whose absolute values are log-subharmonic functions. However, Kalaj's class of such smooth functions does not contain the class of holomorphic functions. Recently, Dai \cite{Dai24} established some sharp inequalities for holomorphic Fock spaces in $\mathbb C^n$.

The structure of the paper is as follows. In section \ref{section2}, we prove Theorem \ref{main11} and Theorem \ref{Iso1}. In section \ref{section3}, we prove a general monotonicity theorem for the hyperbolic measure of the superlevel sets of holomorphic functions which used an ingenious method from \cite{NT22} to the complex hyperbolic ball. Finally, we deduce from it the Theorem \ref{increasing}, Theorem \ref{convex} and Corollary \ref{cor}.

\section{Isoperimetric inequality}\label{section2}
In this section, we established two isoperimetric inequalities on the complex hyperbolic ball(see Theorem \ref{main1} and Theorem \ref{Iso}) which are crucial ingredients in the proof of the monotonicity theorem for the hyperbolic measure of superlevel sets of holomorphic functions in section \ref{section3}.
\subsection{Total curvature and the isoperimetric inequality}
A Cartan-Hadamard manifold $M$ is a complete simply connected $n$-dimensional Riemannian space
with nonpositive sectional curvature. The Cartan-Hadamard conjecture states
that, if $n\geq 2$, the isoperimetric inequality
\begin{equation}\label{con}
\mathrm{per}(\Omega)^n\geq \frac{\mathrm{per}({\mathbf B}_n)^n}{\mathrm{vol}({\mathbf B}_n)^{n-1}}\mathrm{vol}(\Omega)^{n-1}
\end{equation}
holds for any bounded set $\Omega\subset M$ of finite perimeter,
where $\mathrm{per}(\cdot)$ stands for the perimeter of the associated set, and ${\mathbf B}_n$ is the unit ball
of $\mathbf{R}^n$. The equality holds if and only if $\Omega$ is isometric to the unit ball of $\mathbb{R}^n$.
The Cartan-Hadmard conjecture was first proved to be true for $n=2$ by  Weil \cite{Weil} and Beckenbach-Rad\'{o} \cite{Beckenbach}, and later only for dimensions $n=3$ by Kleiner \cite{Kleiner}, and $n=4$ by Croke \cite{Croke}. Recently, Ghomi-Spruck \cite {MJoel} developed a comparison formula for total curvature of level sets of functions on Riemann manifolds for solving this conjecture.

 The boundary $\Gamma=\partial\Omega$ of a compact subset $\Omega$ of $M$ with interior points and convex in the geodesic sense, is called a closed convex hypersurface. If $\Gamma$ is of class $\mathcal{C}^{1,1}$, then its Gauss-Kronecker curvature, or determinant of second
fundamental form, denoted by $\mathrm{GK}$ is well-defined almost everywhere. So the total curvature of $\Gamma$ can be defined by
\begin{equation*}
\mathcal{G}(\Gamma):=\int_{\Gamma} \vert\mathrm{GK}\vert d\sigma,
\end{equation*}
where $d\sigma$ is the surface area of $\Gamma$ induced by the Riemann metric on $M$.

When $n=3$, Kleiner \cite{Kleiner} showed that when the total curvature of every compact convex hypersurface $\Gamma$ with $\mathcal G(\Gamma)\geq {\rm vol}(\mathbf S^2)$,   where $\mathbf S^2$ is the unit sphere in $\mathbf R^2$, then the isoperimetric inequality holds. It was stated in the same paper that this implication should hold in all dimensions. Recently, Ghomi-Spruck \cite{MJoel} 
proved that this is indeed the case.
\begin{theorem}(\cite[Theorem 7.1]{MJoel})\label{MJoel}
Suppose that the total curvature of every compact convex  hypersurface $\Gamma$ of class $\mathcal{C}^{1,1}$ satisfies the curvature inequality
\begin{equation}\label{MJoel.1}
\mathcal{G}(\Gamma)\geq\mathrm{vol}({\mathbf S}^{n-1}),
\end{equation}
where $\mathbf S^{n-1}$ denotes the unit sphere in $\mathbf{R}^n$ and ${\rm vol}(\cdot)$ stands for the volume of the associated set with respect to the Euclidean metric. Then all bounded subsets of $M$ satisfy the isoperimetric inequality \eqref{con}, with equality only for
Euclidean balls.
\end{theorem}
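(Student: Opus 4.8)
The plan is to carry out the total–curvature argument of Ghomi-Spruck: use the hypothesis (\ref{MJoel.1}) to control the geometry of a nested family of hypersurfaces exhausting $\Omega$, convert this into a differential inequality relating enclosed volume and perimeter, and integrate it while matching the constant against the degenerate Euclidean limit.

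First I would make the usual reductions. By inner/outer regularization and lower semicontinuity of perimeter it suffices to prove (\ref{con}) for $\Omega$ with smooth boundary; and, using the existence of isoperimetric regions together with a continuity argument in the volume parameter (the equality/rigidity statement taking care of the borderline volumes), one reduces to the case where $\Omega$ is geodesically convex. Then the inner parallel sets $\Omega_t=\{x\in\Omega:\mathrm{dist}(x,\partial\Omega)\ge t\}$ are convex for every $t\in[0,t^{*}]$, where $t^{*}$ is the inradius. Writing $V(t)=\mathrm{vol}(\Omega_t)$ and $A(t)=\mathrm{per}(\Omega_t)$, one has, for a.e.\ $t$ (after discarding the measure–zero cut locus of $\partial\Omega$), the first–variation identities $V'(t)=-A(t)$ and $A'(t)=-\int_{\partial\Omega_t}H_t\,d\sigma$, where $H_t$ is the unnormalized mean curvature of the convex hypersurface $\partial\Omega_t$.

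Next I would bring in (\ref{MJoel.1}). On each convex $\partial\Omega_t$ all principal curvatures are nonnegative, so the arithmetic–geometric mean inequality gives the pointwise bound $\mathrm{GK}_t\le\bigl(H_t/(n-1)\bigr)^{n-1}$; integrating and invoking $\mathcal{G}(\partial\Omega_t)\ge\mathrm{vol}(\mathbf{S}^{n-1})$ yields $\int_{\partial\Omega_t}H_t^{\,n-1}\,d\sigma\ge(n-1)^{n-1}\,\mathrm{vol}(\mathbf{S}^{n-1})$. Feeding this, in combination with the comparison formula of \cite{MJoel,MJoel1} — which in a Cartan-Hadamard manifold bounds the deficit between the total curvature of $\partial\Omega_t$ and its Euclidean value by a nonpositive integral of sectional curvatures, so that the flat case is the borderline one — into the evolution $A'(t)=-\int_{\partial\Omega_t}H_t\,d\sigma$, I expect to be led to the differential inequality
\begin{equation*}
\frac{d}{dt}\Bigl(A(t)^{n}-c_n\,V(t)^{n-1}\Bigr)\le 0,\qquad c_n:=\frac{\mathrm{per}(\mathbf{B}_n)^{n}}{\mathrm{vol}(\mathbf{B}_n)^{n-1}},
\end{equation*}
which for Euclidean balls and their parallel bodies is an equality (the bracket being identically zero). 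Integrating from $t=0$ to $t=t^{*}$, where $V(t^{*})=0$ and $\liminf_{t\to t^{*}}A(t)\ge 0$, then gives $A(0)^{n}\ge c_n\,V(0)^{n-1}$, i.e.\ (\ref{con}). The equality case is traced back to equality in the AM–GM step (every $\partial\Omega_t$ totally umbilic) and in the curvature comparison (sectional curvatures vanishing along the normal geodesics foliating $\Omega$), which forces $\Omega$ to be flat, hence isometric to a Euclidean ball.

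The hard part will be the middle step: turning the \emph{integrated} bound $\int_{\partial\Omega_t}H_t^{\,n-1}\,d\sigma\ge(n-1)^{n-1}\,\mathrm{vol}(\mathbf{S}^{n-1})$ into a serviceable pointwise-in-$t$ inequality between $A'(t)$ and $V(t)$. Elementary inequalities (Hölder, Jensen) only bound $\int_{\partial\Omega_t}H_t^{\,n-1}$ from below and never produce a lower bound on $\int_{\partial\Omega_t}H_t=-A'(t)$, so one genuinely needs the comparison formula of \cite{MJoel,MJoel1} to tie the total curvature of the nested family to the volume it sweeps out. The other technical points requiring care are the structure of $\Omega_t$ near the cut locus of $\partial\Omega$ and the justification of the reduction to a geodesically convex domain.
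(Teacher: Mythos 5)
The paper does not prove this theorem; it is cited verbatim from Ghomi--Spruck \cite[Theorem 7.1]{MJoel}, whose proof follows Kleiner's strategy via the \emph{isoperimetric profile}, not via inner parallel sets.

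Your plan has a genuine gap, and you have correctly located it yourself. The inner-parallel-set evolution gives $V'(t)=-A(t)$ and $A'(t)=-\int_{\partial\Omega_t}H_t$, and the hypothesis plus AM--GM yields $\int_{\partial\Omega_t}H_t^{\,n-1}\geq(n-1)^{n-1}\mathrm{vol}(\mathbf{S}^{n-1})$. But to make $\tfrac{d}{dt}\bigl(A^n-c_nV^{n-1}\bigr)\leq 0$ you need a \emph{lower} bound on $\int_{\partial\Omega_t}H_t$, and neither H\"older nor Jensen produces one from a lower bound on $\int H_t^{n-1}$: both run the wrong way (convexity of $x\mapsto x^{n-1}$ gives $\int H_t^{n-1}\geq(\int H_t)^{n-1}/A^{n-2}$, an upper bound on $\int H_t$). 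The comparison formula of Theorem~\ref{comparison} does not rescue this either; in Ghomi--Spruck and in the present paper it is the device used to \emph{verify} the hypothesis $\mathcal{G}(\Gamma)\geq\mathrm{vol}(\mathbf{S}^{n-1})$ (by showing total curvature is monotone along nested convex hypersurfaces, hence bounded below by its limit at a shrinking geodesic sphere), not to deduce the isoperimetric inequality from it. The parallel-set route simply cannot close.

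The correct mechanism is to work with an isoperimetric minimizer $\Omega_v$ of volume $v$, whose (regular part of the) boundary has \emph{constant} mean curvature $H(v)$, and with the isoperimetric profile $I(v)=\mathrm{per}(\Omega_v)$, which satisfies $I'(v)=H(v)$ where differentiable. With $H$ constant the integrated total-curvature bound immediately becomes pointwise: $H^{n-1}I(v)\geq(n-1)^{n-1}\mathrm{vol}(\mathbf{S}^{n-1})$, hence $I'(v)\,I(v)^{1/(n-1)}\geq(n-1)\,\mathrm{vol}(\mathbf{S}^{n-1})^{1/(n-1)}$, and integrating $(I^{n/(n-1)})'\geq n\,\mathrm{vol}(\mathbf{S}^{n-1})^{1/(n-1)}$ from $0$ to $v$ gives exactly \eqref{con}. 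The real technical work in Ghomi--Spruck is then the reduction to the convex setting (so that all principal curvatures are nonnegative and AM--GM applies), together with the regularity of minimizers; this is also where your one-line ``one reduces to the case where $\Omega$ is geodesically convex'' is doing far more work than it acknowledges. Rigidity comes from equality in AM--GM (umbilicity) together with the flatness forced by the total-curvature comparison.
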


Thus, in order to obtain the isoperimetric inequality, we only need to verify the lower bound of the total curvature. However, the computation of
the total curvature seems to be impossible. Hence, we need some basic facts and  a comparison formula to help us to estimate the lower bound.

\begin{lemma}(\cite[Lemma 4.2]{MJoel1})\label{geodesic}
Let $S_\rho$ be a geodesic sphere of radius $\rho$ centered at a point in a $n$-dimensional Riemannian manifold. As $\rho\rightarrow 0$ we have
$$\mathcal{G}(S_\rho)\rightarrow\mathrm{vol}({\mathbf S}^{n-1}).$$
\end{lemma}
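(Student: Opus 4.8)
The plan is to reduce the statement to a purely local computation at the center $p$ of the geodesic sphere, rescaling so that as $\rho\to 0$ the ambient geometry converges to the Euclidean one, for which the total curvature of the unit sphere is exactly $\mathrm{vol}(\mathbf S^{n-1})$ because all its principal curvatures equal $1$.

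First I would fix $\rho$ smaller than the injectivity radius at $p$, so that $S_\rho=\exp_p(\rho\,\mathbf S^{n-1})$ is a smooth embedded hypersurface; for $\rho$ small it is moreover convex with all principal curvatures positive, so $|\mathrm{GK}|=\mathrm{GK}$ on it and $\mathcal{G}(S_\rho)$ is well defined. Pulling $g$ back by $\exp_p$ gives geodesic normal coordinates on a ball in $T_pM\cong\mathbf R^n$; with $\Phi_\rho(v)=\rho v$ I set $g_\rho:=\rho^{-2}\Phi_\rho^*\exp_p^*g$, so that in these coordinates $S_\rho$ becomes the \emph{fixed} Euclidean unit sphere $\mathbf S^{n-1}$ carrying the metric $g_\rho$, while $\exp_p\circ\Phi_\rho$ is an isometry of $(\mathbf S^{n-1},\rho^2 g_\rho)$ onto $(S_\rho,g)$ with its induced metric. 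The analytic input is the normal-coordinate expansion $g_{ij}(x)=\delta_{ij}-\tfrac13 R_{ikjl}(p)x^kx^l+O(|x|^3)$, which gives $(g_\rho)_{ij}(x)=\delta_{ij}-\tfrac{\rho^2}{3}R_{ikjl}(p)x^kx^l+O(\rho^3)$ uniformly on the closed unit ball, so $g_\rho\to g_{\mathrm{Euc}}$ in the $C^1$ topology (indeed in $C^\infty$) as $\rho\to 0$.

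Next I would use that $\mathcal{G}$ is invariant under homotheties of the metric — the Gauss--Kronecker curvature scales like $(\mathrm{length})^{-(n-1)}$ while the induced surface measure scales like $(\mathrm{length})^{n-1}$ — to obtain $\mathcal{G}(S_\rho,g)=\mathcal{G}(\mathbf S^{n-1},\rho^2 g_\rho)=\mathcal{G}(\mathbf S^{n-1},g_\rho)$. Since the Gauss--Kronecker curvature of the fixed hypersurface $\mathbf S^{n-1}$ and its induced area form are built algebraically from the $1$-jet of the ambient metric along $\mathbf S^{n-1}$, the map $h\mapsto\mathcal{G}(\mathbf S^{n-1},h)$ is continuous in the $C^1$ topology on metrics near $g_{\mathrm{Euc}}$; combined with $g_\rho\to g_{\mathrm{Euc}}$ this yields $\mathcal{G}(S_\rho,g)\to\mathcal{G}(\mathbf S^{n-1},g_{\mathrm{Euc}})$. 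For the round unit sphere in $\mathbf R^n$ every principal curvature equals $1$, so $\mathrm{GK}\equiv1$ and $\mathcal{G}(\mathbf S^{n-1},g_{\mathrm{Euc}})=\int_{\mathbf S^{n-1}}1\,d\sigma=\mathrm{vol}(\mathbf S^{n-1})$, which is the assertion.

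A more computational variant replaces this continuity step by an explicit asymptotic expansion: along each unit-speed geodesic issuing from $p$, the shape operator $S(\rho)$ of the geodesic sphere of radius $\rho$ satisfies the matrix Riccati equation $S'+S^2+\mathcal{R}=0$ (the derivative taken along the radial direction), where $\mathcal{R}=R(\cdot,N)N$, together with the boundary behaviour $S(\rho)\sim\rho^{-1}\mathrm{Id}$ as $\rho\to 0$; solving perturbatively gives $S(\rho)=\rho^{-1}\mathrm{Id}-\tfrac{\rho}{3}\mathcal{R}(p)+O(\rho^2)$, whence $\mathrm{GK}=\det S(\rho)=\rho^{-(n-1)}\bigl(1+O(\rho^2)\bigr)$, while the classical Jacobi-field expansion gives the area element $d\sigma=\rho^{n-1}\bigl(1+O(\rho^2)\bigr)\,d\omega$ with $d\omega$ the standard measure on $\mathbf S^{n-1}\subset T_pM$. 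Multiplying and integrating over $\mathbf S^{n-1}$ then yields $\mathcal{G}(S_\rho)=\mathrm{vol}(\mathbf S^{n-1})+O(\rho^2)$. In either route the only delicate point is the \emph{uniform} (in the geodesic direction) control of the error terms — equivalently the $C^1$-closeness of the rescaled $S_\rho$ to a Euclidean sphere — which rests on the smooth dependence of geodesics and Jacobi fields on their initial data over the compact set of unit directions at $p$; everything else reduces to routine determinant and change-of-variables computations.
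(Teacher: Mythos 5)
The paper does not prove this lemma: it is cited from \cite[Lemma~4.2]{MJoel1} without an argument, so there is no internal proof to compare against. Your proof is correct in both of its variants. The rescaling route works because $\mathcal{G}$ is homothety-invariant --- under $g\mapsto\lambda^2 g$ the shape operator scales by $\lambda^{-1}$, so $\mathrm{GK}$ scales by $\lambda^{-(n-1)}$, cancelling the $\lambda^{n-1}$ scaling of the induced surface measure --- and because $\mathcal{G}$ of a fixed hypersurface depends continuously on the ambient metric in the $C^1$ topology (it is built from the $1$-jet of the metric through the Christoffel symbols and the unit normal), which you combine with the $C^1$-convergence $g_\rho\to g_{\mathrm{Euc}}$ supplied by the normal-coordinate expansion. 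The second, Riccati-equation route is the standard local computation --- $S(\rho)=\rho^{-1}\mathrm{Id}-\tfrac{\rho}{3}\mathcal{R}(p)+O(\rho^2)$ so $\mathrm{GK}=\rho^{-(n-1)}\bigl(1+O(\rho^2)\bigr)$, cancelling against $d\sigma=\rho^{n-1}\bigl(1+O(\rho^2)\bigr)\,d\omega$ --- and is essentially the kind of argument one finds in the cited Ghomi--Spruck reference. You have also correctly flagged the two points requiring care: positivity of the principal curvatures for small $\rho$ so that $|\mathrm{GK}|=\mathrm{GK}$, and uniformity of the $O(\rho^2)$ error terms over the unit directions at $p$, which follows from compactness of $\mathbf S^{n-1}\subset T_pM$ and smooth dependence of geodesics and Jacobi fields on their initial data.
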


In the following, we introduce a comparison formula which plays an essential role to verify the inequality \eqref{MJoel.1} in Theorem \ref{MJoel}.
\begin{theorem}(\cite[Theorem 4.7]{MJoel})\label{comparison}
Let $\Gamma$ and $\gamma$ be closed $\mathcal{C}^{1,1}$ hypersurfaces in a Riemannian $n$-manifold $M$ bounding domains
$\Omega$ and $D$, respectively, with $\overline{D}\subset\Omega$. Suppose there exists a
 $\mathcal{C}^{1,1}$ function $u$ on $\overline{\Omega\setminus D}$ with $\nabla u\neq 0$, which is
constant on $\Gamma$ and $\gamma$. Let $\kappa^u:=\left(\kappa^u_1,\ldots,\kappa^u_{n-1}\right)$ be the principal curvatures of level sets of $u$ with respect to
$E_n:=\frac{\nabla u}{\vert\nabla u\vert}$, and let $E_1, \ldots, E_{n-1}$ be the corresponding principal directions. Then, we have
\begin{equation*}
\begin{aligned}
\mathcal{G}(\Gamma)-\mathcal{G}(\gamma)=&-\int_{\Omega\setminus D}\sum\kappa^u_{i_1}\cdots\kappa^u_{i_{n-2}}K_{i_{n-1},n}d\mu\\
&+\frac{1}{\vert\nabla u\vert}\int_{\Omega\setminus D}\sum\kappa^u_{i_1}\cdots\kappa^u_{i_{n-3}}\vert\nabla u\vert_{i_{n-2}} R_{i_{n-1},i_{n-2},i_{n-1},n} d\mu,
\end{aligned}
\end{equation*}
where $d\mu$ denotes the $n$-dimensional Riemannian volume measure on $M$, $\vert\nabla u\vert_i:=\nabla_{E_i}\vert\nabla u\vert$, $R_{ijkl}=R(E_i,E_j, E_k, E_l)$
are components of the Riemann curvature tensor of $M$, $K_{ij}=R_{ijij}$ is the sectional curvature, and the summations take place over distinct values of
$1\leq i_1,\dots,i_{n-1}\leq n-1$, with $i_1<\cdots<i_{n-2}$ in the first term and $i_1<\cdots<i_{n-3}$ in the second term.
\end{theorem}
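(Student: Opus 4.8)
The statement is the comparison formula of Ghomi--Spruck \cite{MJoel}, so the plan is to reconstruct the idea of their argument: foliate $\overline{\Omega\setminus D}$ by the level sets of $u$ and differentiate the total curvature slice by slice. Relabelling $u$ if necessary, one may assume $u\equiv 0$ on $\gamma$, $u\equiv a>0$ on $\Gamma$, and (this is the hypothesis) $u$ has no critical points, so that the level sets $\Sigma_t=u^{-1}(t)$, $0\le t\le a$, are closed $\mathcal C^{1,1}$ hypersurfaces foliating $\overline{\Omega\setminus D}$ with unit normal $E_n=\nabla u/|\nabla u|$. The vector field $X=\nabla u/|\nabla u|^2$ satisfies $X(u)\equiv 1$, so its flow carries $\gamma=\Sigma_0$ to $\Sigma_t$; moreover $X=\varphi\,E_n$ is purely normal with speed $\varphi:=1/|\nabla u|$. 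Hence $\mathcal G(\Gamma)-\mathcal G(\gamma)=\int_0^a\tfrac{d}{dt}\mathcal G(\Sigma_t)\,dt$, and the whole problem reduces to computing $\tfrac{d}{dt}\mathcal G(\Sigma_t)$ for a normal variation of speed $\varphi$.

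For that I would combine two standard evolution facts for a normal variation of speed $\varphi$: the induced area element evolves by $\partial_t(d\sigma_t)=\varphi H\,d\sigma_t$ with $H=\sum_i\kappa^u_i$ the mean curvature, and the shape operator $S$ of $\Sigma_t$ obeys a Riccati-type law $\partial_t S=-\varphi(S^2+\mathcal R_n)+\mathrm{Hess}_{\Sigma_t}\varphi$ (with a sign convention I will not fix here), where $\mathcal R_n$ is the Jacobi operator $v\mapsto R(v,E_n)E_n$ on $T\Sigma_t$. Since $\mathrm{GK}=\det S=\sigma_{n-1}(S)$ and the gradient of $\sigma_{n-1}$ in $S$ is the $(n-2)$-nd Newton transformation $T_{n-2}(S)$, one finds $\partial_t(\mathrm{GK}\,d\sigma_t)=\big(\mathrm{tr}(T_{n-2}(S)\,\partial_t S)+\varphi H\,\mathrm{GK}\big)\,d\sigma_t$. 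Inserting the Riccati law and using the pointwise identity $\mathrm{tr}(T_{n-2}(S)S^2)=H\,\sigma_{n-1}(S)$, the two terms quadratic in the shape operator cancel, leaving only curvature: $\tfrac{d}{dt}\mathcal G(\Sigma_t)=-\int_{\Sigma_t}\varphi\,\mathrm{tr}(T_{n-2}(S)\mathcal R_n)\,d\sigma_t+\int_{\Sigma_t}\mathrm{tr}(T_{n-2}(S)\,\mathrm{Hess}_{\Sigma_t}\varphi)\,d\sigma_t$. In a principal frame the first integrand equals $\sum\kappa^u_{i_1}\cdots\kappa^u_{i_{n-2}}K_{i_{n-1},n}$, so the coarea identity $\int_0^a\!\int_{\Sigma_t}\varphi\,(\,\cdot\,)\,d\sigma_t\,dt=\int_{\Omega\setminus D}(\,\cdot\,)\,d\mu$ converts the first integral into the first term of the stated formula. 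For the second integral I would integrate by parts over the closed hypersurface $\Sigma_t$ to get $\mp\int_{\Sigma_t}\langle\mathrm{div}_{\Sigma_t}T_{n-2}(S),\,\nabla^{\Sigma_t}\varphi\rangle\,d\sigma_t$, then invoke the key fact that in a curved ambient manifold $T_{n-2}(S)$ is \emph{not} divergence-free: its divergence is a contracted Codazzi term, $(\mathrm{div}_{\Sigma_t}T_{n-2}(S))_j=\pm\sum\kappa^u_{i_1}\cdots\kappa^u_{i_{n-3}}R_{i_{n-1},j,i_{n-1},n}$, and finally rewrite $\nabla^{\Sigma_t}\varphi=\nabla(1/|\nabla u|)$ in terms of $|\nabla u|_j:=\nabla_{E_j}|\nabla u|$; after the coarea substitution this reorganizes into the second term of the formula.

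I expect two obstacles. The first is pure bookkeeping but genuinely delicate: pinning down the signs and index contractions in the two curvature identities above — the Riccati law for $S$ under a non-geodesic, non-unit-speed normal variation, and above all the divergence formula for the Newton transformation $T_{n-2}(S)$ in a non-space-form manifold, which is the only place where components of the Riemann tensor other than the sectional curvatures enter, and which must be contracted exactly right to produce the $R_{i_{n-1},i_{n-2},i_{n-1},n}$ factor. The second is the low regularity: $\Gamma$, $\gamma$, and hence the foliation $\{\Sigma_t\}$ are only of class $\mathcal C^{1,1}$, so $S$ and $\mathrm{GK}$ are merely bounded measurable; the entire slicing-and-integration-by-parts computation must first be carried out for smooth $u$ and then recovered in the limit by a mollification argument with estimates uniform in the smoothing parameter — and it is precisely here that one needs $\mathcal C^{1,1}$, so that $S$ stays bounded, rather than mere continuity of the hypersurfaces.
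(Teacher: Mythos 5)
The paper does not prove Theorem \ref{comparison}: it is quoted directly from \cite[Theorem 4.7]{MJoel} and used as a black box, so there is no in-text proof to compare against. Your reconstruction --- foliating $\overline{\Omega\setminus D}$ by the level sets of $u$, differentiating $\mathcal G(\Sigma_t)$ along the normal flow of speed $1/|\nabla u|$, using the Riccati evolution of the shape operator together with the Newton transformation $T_{n-2}(S)$ to cancel the terms quadratic in $S$ and expose the ambient curvature, and integrating by parts against $\mathrm{div}_{\Sigma_t}T_{n-2}(S)$ to produce the contracted-Codazzi second term --- is a coherent plan in the spirit of the Ghomi--Spruck argument, and you correctly identify the Newton identity $\mathrm{tr}(T_{n-2}(S)S^2)=H\,\sigma_{n-1}(S)$ as the source of the cancellation.

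That said, the proposal is an outline, not a proof, and the two obstacles you flag are exactly where it stops short. The sign and contraction bookkeeping in the Riccati law and in the divergence identity for $T_{n-2}(S)$ is not merely tedious: it is where the specific combination $R_{i_{n-1},i_{n-2},i_{n-1},n}$ with the factor $|\nabla u|_{i_{n-2}}/|\nabla u|$ must emerge, and with the correct sign relative to $-\int\sum\kappa^u_{i_1}\cdots\kappa^u_{i_{n-2}}K_{i_{n-1},n}\,d\mu$; you explicitly leave the sign conventions unfixed, so the statement as written is not reached. Likewise the $\mathcal C^{1,1}$ issue is genuine --- $S$ and $\mathrm{GK}$ are only $L^\infty$, so the slice-by-slice differentiation and the integration by parts on $\Sigma_t$ are formal --- and you acknowledge a mollification argument is needed but supply neither the scheme nor the estimates uniform in the smoothing parameter. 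As written, the proposal correctly identifies the method but does not carry it to a proof.
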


\subsection{Isoperimetric inequality on complex hyperbolic ball}

Let  $M$ be the unit ball $\mathbb{B}_n$ in $\mathbb{C}^n$ with Bergman metric $g$, and let $\Gamma$ and $\gamma$ be closed convex $\mathcal{C}^{1,1}$ hypersurfaces in $\mathbb{B}_n$ bounding domains
$\Omega$ and $D$, respectively, with $\overline{D}\subset\Omega$.  We can find $u$ which is convex in
$\mathbb B_n$ and satisfies the assumptions as in Theorem \ref{comparison} (see \cite[Lem.1]{B02}).

As we know,  $(\mathbb{B}_n, g)$ has
negative constant holomorphic sectional curvature. Hence, its Riemann curvature tensor with respect to the local coordinates $z=(z_1, \cdots, z_n)$ centered at a given point, where $z_j=x_j+i x_{j+n}, 1\leq j\leq n$ and $g(\frac{\partial}{\partial x_i}, \frac{\partial}{\partial x_j})(0)=\delta_{ij}, 1\leq i, j\leq 2n$ has the following form.
\begin{equation}\label{curvature}
\begin{aligned}
R_{ijkl}(0)&=\delta_{il}\delta_{jk}-\delta_{ik}\delta_{jl},\\
R_{i\bar{j}k\bar{l}}(0)&=-\delta_{il}\delta_{\bar{j}\bar{k}}-\delta_{ik}\delta_{\bar{j}\bar{l}}-2\delta_{ij}\delta_{kl},\\
R_{i\bar{j}kl}(0)&=0, \ \ \ R_{ijk\bar{l}}(0)=0,
\end{aligned}
\end{equation}
where  the index $\bar{l}$ denotes $l+n$ and $R_{ijkl}=R(\frac{\partial}{\partial x_i}, \frac{\partial}{\partial x_j}, \frac{\partial}{\partial x_k}, \frac{\partial}{\partial x_l}), 1\leq i, j, k, l\leq 2n$. Moreover, we can assume that $\frac{\partial}{\partial x_{2n}}|_0=\frac{\nabla_g u}{|\nabla_g u|_g}(0)$ and $\frac{\partial}{\partial x_j}|_0, 1\leq j\leq 2n-1$ are corresponding principal directions at $0$. From (\ref{curvature}) one has
$$R_{i_{2n-1}, i_{2n-2}, i_{2n-1}, 2n}(0)=0.$$
Hence, Theorem \ref{comparison} yields
\begin{equation*}
\begin{aligned}
\mathcal{G}(\Gamma)-\mathcal{G}(\gamma)&=-\int_{\Omega\setminus D}\sum\kappa^u_{i_1}\cdots\kappa^u_{i_{2n-2}}K_{i_{2n-1}, 2n}d\mu\\
&\geq -K \int_{\Omega\setminus D}\sum\kappa^u_{i_1}\cdots\kappa^u_{i_{2n-2}}d\mu\geq 0,
\end{aligned}
\end{equation*}
where $K<0$ is the upper bound for sectional curvatures of $\mathbb{B}_n$ with respect to the Bergman metric $g$, and each $\kappa^u_j\geq 0$ since $u$ is convex.

Finally, letting $\gamma$ be a sequence of geodesic balls with  radius approaching $0$, Lemma \ref{geodesic} shows that
$$\mathcal{G}(\Gamma)\geq\mathrm{vol}({\mathbf S}^{2n-1}).$$
Therefore, by Theorem \ref{MJoel} we can obtain the following isoperimetric inequality on the complex hyperbolic ball.
\begin{theorem}\label{main1}
Let $(\mathbb B_n, g)$ be the complex hyperbolic ball with $g$ the Bergman metric. Let $\Omega\Subset\mathbb B_n$ be a relatively compact subset. Then the following isoperimetric inequality holds.
\begin{equation}
{\rm per}_g(\Omega)^{2n}\geq\frac{{\rm per}(\mathbb B_n)^{2n}}{{\rm vol}(\mathbb B_n)^{2n-1}} {{\rm vol}_g(\Omega)}^{2n-1},
\end{equation}
where ${\rm vol}_g(\cdot)$, ${\rm per}_g(\cdot)$ and ${\rm per}(\cdot)$, ${\rm vol}(\cdot)$ are associated with Bergman metric and Euclidean metric, respectively.

\end{theorem}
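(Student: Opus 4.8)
The plan is to deduce Theorem~\ref{main1} from the total-curvature criterion of Ghomi--Spruck. Since $(\mathbb B_n,g)$ is complete, simply connected and has strictly negative (hence nonpositive) sectional curvature, it is a Cartan--Hadamard manifold of real dimension $2n$, so Theorem~\ref{MJoel} applies once one checks the curvature inequality $\mathcal G(\Gamma)\geq\mathrm{vol}(\mathbf S^{2n-1})$ for every compact convex $\mathcal C^{1,1}$ hypersurface $\Gamma\subset\mathbb B_n$. I would also record at the outset that the model ball $\mathbf B_{2n}$ in the conclusion of Theorem~\ref{MJoel} is literally the Euclidean unit ball $\mathbb B_n\subset\mathbb R^{2n}$, so the resulting constant is exactly $\mathrm{per}(\mathbb B_n)^{2n}/\mathrm{vol}(\mathbb B_n)^{2n-1}$, matching the statement.

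To verify the curvature bound, fix a convex $\mathcal C^{1,1}$ hypersurface $\Gamma=\partial\Omega$ and let $\gamma=\partial D$ be a small geodesic sphere with $\overline D\subset\Omega$. By \cite[Lem.~1]{B02} there is a $\mathcal C^{1,1}$ function $u$ on $\overline{\Omega\setminus D}$, convex on $\mathbb B_n$, with $\nabla u\neq 0$ and constant on $\Gamma$ and $\gamma$, so the comparison formula of Theorem~\ref{comparison} is available. Choosing geodesic normal coordinates at a point with $\partial_{x_{2n}}$ aligned with $\nabla_g u/|\nabla_g u|_g$ and $\partial_{x_1},\dots,\partial_{x_{2n-1}}$ the principal directions of the level set, the explicit curvature identities \eqref{curvature} of the complex hyperbolic ball give $R_{i_{2n-1},i_{2n-2},i_{2n-1},2n}=0$, so the second integral in Theorem~\ref{comparison} drops out and
\begin{equation*}
\mathcal G(\Gamma)-\mathcal G(\gamma)=-\int_{\Omega\setminus D}\sum\kappa^u_{i_1}\cdots\kappa^u_{i_{2n-2}}K_{i_{2n-1},2n}\,d\mu .
\end{equation*}
Convexity of $u$ forces $\kappa^u_j\geq 0$, and the sectional curvatures of $(\mathbb B_n,g)$ are bounded above by some $K<0$, so the integrand is $\leq 0$ and $\mathcal G(\Gamma)\geq\mathcal G(\gamma)$. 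Letting the radius of $\gamma$ shrink to $0$ and applying Lemma~\ref{geodesic} gives $\mathcal G(\gamma)\to\mathrm{vol}(\mathbf S^{2n-1})$, hence $\mathcal G(\Gamma)\geq\mathrm{vol}(\mathbf S^{2n-1})$. Theorem~\ref{MJoel} then produces the isoperimetric inequality in dimension $2n$, together with the equality case for Euclidean balls.

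The step I expect to be the main obstacle is the curvature bookkeeping in Theorem~\ref{comparison}: one must be sure that the normal-coordinate identities \eqref{curvature} genuinely annihilate every tensor component of the form $R_{i_{2n-1},i_{2n-2},i_{2n-1},2n}$ no matter how the principal frame $E_1,\dots,E_{2n-1}$ is positioned relative to the complex structure $J$, and that the surviving sectional curvatures $K_{i_{2n-1},2n}$ admit the uniform negative bound $K<0$ that makes the comparison term nonpositive. The only other delicate point, the reduction from an arbitrary bounded set of finite perimeter to convex $\mathcal C^{1,1}$ competitors, is already built into Theorem~\ref{MJoel} and needs no separate argument here.
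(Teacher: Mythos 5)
Your proposal is correct and follows essentially the same route as the paper: invoke the Ghomi--Spruck comparison formula (Theorem~\ref{comparison}) with a convex defining function from Borbely, use the explicit curvature identities \eqref{curvature} of the Bergman metric to annihilate the second integral and make the first nonpositive, shrink $\gamma$ to a point via Lemma~\ref{geodesic} to get $\mathcal G(\Gamma)\geq\mathrm{vol}(\mathbf S^{2n-1})$, and then apply Theorem~\ref{MJoel}. The only difference is expository: you flag the identification of $\mathbf B_{2n}$ with $\mathbb B_n\subset\mathbb R^{2n}$ to justify the constant, which the paper leaves implicit.
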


\subsection{Rearrangement function and P\'{o}lya-Szeg\H{o} inequality}\label{section}

We start this section by recalling the Coarea formula for functions on a Riemannian manifold.
\begin{theorem}(\cite[pp. 302-303]{Chavel})\label{coarea}
Let $\Omega$ be a relatively compact domain
in the $n$-dimensional Riemannian manifold $(M,h)$,
and $f:\overline\Omega\rightarrow [0, +\infty)$ a function in
$\mathcal{C}^{0}(\overline\Omega)\cap \mathcal{C}^{\infty}(\Omega)$, with $f|_{\partial\Omega}=0$. For any regular value $t$ of
$ f$, we let
\begin{equation*}
\Gamma(t)= f^{-1}(t), \ \ \ \ \ \ A_t=A(\Gamma(t)),
\end{equation*}
where $A(\cdot)$ is the area of the associated set. We denote by $d A_t$ the $(n-1)$-dimensional Riemannian measure on $\Gamma(t)$. Then,
\begin{equation*}
dV|_{\Gamma(t)}=\frac{d A_t dt}{\vert\nabla f\vert_h},
\end{equation*}
and for any function $\phi\in L^1(\Omega)$, we have
\begin{equation*}
\int_\Omega \phi\vert\nabla f\vert_h dV=\int_0^\infty dt\int_{\Gamma(t)}\phi dA_t.
\end{equation*}
\end{theorem}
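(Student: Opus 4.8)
The statement is the classical coarea formula on a Riemannian manifold, and the plan is to reduce it to a local identity of volume forms combined with Sard's theorem. First I would note that, by Sard's theorem, the set of critical values of $f$ has Lebesgue measure zero in $[0,\infty)$, so for almost every $t$ the level set $\Gamma(t)=f^{-1}(t)$ is a smooth embedded hypersurface of $\Omega$; since $f$ vanishes on $\partial\Omega$ and is proper on $\overline\Omega$, each such $\Gamma(t)$ is compact. On the open set $\Omega'=\{x\in\Omega:\nabla f(x)\neq 0\}$, whose complement is closed, $dV$-negligible, and carries $|\nabla f|_h=0$ (hence contributes nothing to either side), set $N:=\nabla f/|\nabla f|_h$; this unit field is normal to every regular level set.

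Next, the heart of the argument is the identity of measures on $\Omega'$. Around any point complete $N$ to a positively oriented orthonormal frame $(E_1,\dots,E_{n-1},E_n=N)$ with $E_1,\dots,E_{n-1}$ tangent to the level set through that point, and let $(\theta^1,\dots,\theta^n)$ be the dual coframe. The Riemannian volume form is $dV=\theta^1\wedge\cdots\wedge\theta^n$, while the induced $(n-1)$-dimensional measure $dA_t$ on $\Gamma(t)$ is the pullback of $\theta^1\wedge\cdots\wedge\theta^{n-1}$. Because $df(E_i)=\langle\nabla f,E_i\rangle_h=0$ for $i\le n-1$ and $df(N)=|\nabla f|_h$, we get $df=|\nabla f|_h\,\theta^n$, i.e.\ $\theta^n=df/|\nabla f|_h$. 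Therefore $dV=dA_t\wedge df/|\nabla f|_h$, which is exactly $dV|_{\Gamma(t)}=dA_t\,dt/|\nabla f|_h$ once the parameter $t$ is identified with the value of $f$. This being a pointwise identity, gluing the local frames by a partition of unity subordinate to a cover of $\Omega'$ yields the global statement.

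Finally, for the integral formula I would apply Fubini's theorem. Writing the integral over $\Omega$ as an integral over $\Omega'$ and substituting the volume-form identity,
$$\int_\Omega\phi\,|\nabla f|_h\,dV=\int_{\Omega'}\phi\,|\nabla f|_h\,\frac{dA_t\,dt}{|\nabla f|_h}=\int_0^\infty\Big(\int_{\Gamma(t)}\phi\,dA_t\Big)dt,$$
the $|\nabla f|_h$ factors cancelling; the interchange is legitimate because $\phi\in L^1(\Omega)$ and, for a.e.\ $t$, $\Gamma(t)$ is a compact smooth hypersurface on which $\phi|_{\Gamma(t)}$ is integrable.

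I expect the only real obstacle to be the measure-theoretic bookkeeping near the critical set of $f$: one must check that the coordinates adapted to the level sets (obtained from the normal flow of $N$, or from the implicit function theorem) can be organized consistently so that the local volume-form identity integrates to the global one, and that the critical set is genuinely negligible for both measures. For $f$ smooth on $\Omega$ and continuous up to $\overline\Omega$ this is standard, and one may alternatively deduce it from the general coarea formula for Lipschitz maps; since the statement is classical, in the paper I would simply invoke \cite{Chavel} and use the two displayed identities.
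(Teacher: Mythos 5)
The paper offers no proof of this result---it is quoted directly from Chavel---and your sketch (Sard's theorem, an adapted orthonormal coframe giving $dV = dA_t\wedge df/|\nabla f|_h$, then Fubini) is precisely the standard argument found in that reference, so there is nothing to reconcile. One small inaccuracy worth noting: the critical set $\{\nabla f=0\}$ of a smooth function need \emph{not} be $dV$-negligible (Sard controls critical \emph{values}, not critical points); what actually lets you discard it is, as you also observe, that $|\nabla f|_h=0$ there makes the integrand $\phi\,|\nabla f|_h$ vanish identically.
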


In the following, we will recall some technique from symmetrization arguments on Riemannian manifolds,
refer to Druet-Hebey \cite{D-H}, Druet-Hebey-Vaugon \cite{D-H-V}, Hebey \cite{He}, and Ni \cite{Ni}.

Let $(\mathbb B_n, g)$ be the complex hyperbolic ball with the Bergman metric $g$. Let $u: \mathbb{B}_n\rightarrow[0, +\infty)$ be a measurable function such that
\begin{equation*}
\mathrm{vol}_g\left(\{z\in\mathbb{B}_n: u(z)>t\}\right)=\int_{\{z\in\mathbb{B}_n: ~u(z)>t\}} d v_g<\infty, \ \ \ \forall t>0.
\end{equation*}
For such a function $u$, its distribution function with respect to the hyperbolic measure, denoted by $\mu$ is defined by
\begin{equation*}
\mu(t)=\mathrm{vol}_g\left(\{z\in\mathbb{B}_n: u(z)>t\}\right).
\end{equation*}
The function $[0,\infty)\ni t\mapsto\mu(t)$ is non-increasing and right continuous. We will define a hyperbolic symmetric decreasing rearrangement function associated to $u$. For reference of rearrangement functions, we refer the readers to Baernstein \cite{Ba1}.

First, we associate such  function $u$ its decreasing rearrangement function $u^*: (0, +\infty)\rightarrow(0, +\infty)$
which is defined by
\begin{equation}\label{rearrangement1}
u^*(t)=\sup\{s>0: \mu(s)>t\}, \forall~ t>0.
\end{equation}
We now define a radially symmetric decreasing function $u^\sharp_g$ of $u$ by
\begin{equation}\label{rearrangement2}
u^\sharp_g(z)=u^*\left(\mathrm{vol}_g(B_g(0,\rho(z)))\right), \ \ \ \ z\in\mathbb{B}_n,
\end{equation}
where $\rho(z)=\frac{1}{2}\ln\frac{1+\vert z\vert}{1-\vert z\vert}$ denotes the geodesic
distance from $z$ to $0$, and $B_g(0,r)$ denotes the open geodesic ball center at $0$ of radius $r$ in $\mathbb{B}_n$.
Then $u^\sharp_g(z)$ is the hyperbolic symmetric decreasing rearrangement function of $u$  on the unit ball with respect to the hyperbolic measure. Moreover, the hyperbolic symmetric decreasing rearrangement $u^{\sharp}_g$ and $u$ have the same distribution, that is, $${\rm vol}_g(\{z\in\mathbb B_n: u^{\sharp}_g(z)>t\})={\rm vol}_g(\{z\in\mathbb B_n: u(z)>t\}), \forall t>0.$$

By using the classical Morse theory and density arguments, in the following we will assume $u: {\mathbb B_n}\rightarrow [0, \infty)$ is a continuous function having compact support $S\subset\mathbb B_n$, where $S$ is smooth enough, $u$ being of class $C^2$-smooth in $S$ and having only non-degenerate critical points in $S$.

By the isoperimetric inequality established in Theorem \ref{main1}, we have the following P\'{o}lya-Szeg\H{o} type inequalities on the complex hyperbolic ball under the Assumption \ref{assumption}.
\begin{theorem}\label{PS}
Let $u:\mathbb{B}_n\rightarrow[0,+\infty)$ be a non-zero function with the above properties. Let $u^\sharp_g$ be its rearrangement function on $\mathbb{B}_n$. Then we have

(i) Volume preservation:
\begin{equation*}
\mathrm{vol}_{g}\left({\rm supp} \ u\right)=\mathrm{vol}_{g}\left({\rm supp} \ u^\sharp_g\right).
\end{equation*}

(ii) Norm preservation: for every $q\in(0,\infty]$,
\begin{equation*}
\Vert u\Vert_{L^{q}(\mathbb{B}_n)}=\Vert u^\sharp_g\Vert_{L^{q}(\mathbb{B}_n)}
\end{equation*}

(iii) P\'{o}lya-Szeg\H{o} inequality: Under the Assumption \ref{assumption}, for every $p\in(1, \infty)$,
$$\int_{\mathbb{B}_n}\vert\nabla_g\ u^\sharp_g\vert_g^p dv_g(z)\leq\int_{\mathbb{B}_n}\vert\nabla_g\ u\vert_g^p dv_g(z).$$
\end{theorem}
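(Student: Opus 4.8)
The plan is to run the classical P\'olya--Szeg\H o symmetrization scheme, transplanted to the Bergman metric through the coarea formula (Theorem~\ref{coarea}) and a sharp isoperimetric comparison for geodesic balls. Parts (i) and (ii) are purely measure-theoretic and use only the equimeasurability of $u$ and $u^\sharp_g$ recorded above. For (i), $\{u^\sharp_g>0\}=B_g(0,R)$ with $\mathrm{vol}_g(B_g(0,R))=\lim_{t\to0^+}\mu(t)=\mathrm{vol}_g(\{u>0\})$, and since both $u$ and $u^\sharp_g$ are continuous with compact, sufficiently regular supports, the supports differ from the sets $\{u>0\}$, $\{u^\sharp_g>0\}$ by $g$-null sets, so the volumes coincide. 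For (ii) with $q<\infty$ one applies the layer-cake identity $\|w\|_{L^q(\mathbb B_n)}^q=q\int_0^\infty t^{q-1}\mathrm{vol}_g(\{w>t\})\,dt$ to $w=u$ and $w=u^\sharp_g$ and invokes $\mathrm{vol}_g(\{u>t\})=\mathrm{vol}_g(\{u^\sharp_g>t\})$; for $q=\infty$ one notes that $\|w\|_{L^\infty}=\inf\{t:\mathrm{vol}_g(\{w>t\})=0\}$ depends only on the distribution function.

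The heart is (iii). Set $\mu(t)=\mathrm{vol}_g(\{u>t\})$ and, for fixed $p\in(1,\infty)$, $F(t)=\int_{\{u>t\}}\vert\nabla_g u\vert_g^p\,dv_g$. Since $u$ is a Morse function with only finitely many critical values, for a.e.\ $t$ the level set $\Gamma(t)=\{u=t\}$ is a smooth closed hypersurface on which $\vert\nabla_g u\vert_g>0$, and the coarea formula yields
\begin{equation*}
-\mu'(t)=\int_{\Gamma(t)}\frac{dA_t}{\vert\nabla_g u\vert_g},\qquad -F'(t)=\int_{\Gamma(t)}\vert\nabla_g u\vert_g^{\,p-1}\,dA_t.
\end{equation*}
Writing $1=\vert\nabla_g u\vert_g^{(p-1)/p}\cdot\vert\nabla_g u\vert_g^{-(p-1)/p}$ and applying H\"older's inequality on $\Gamma(t)$ with exponents $p$ and $p/(p-1)$ gives
\begin{equation*}
A(\Gamma(t))^{p}\le\bigl(-F'(t)\bigr)\,\bigl(-\mu'(t)\bigr)^{p-1},\qquad\text{hence}\qquad -F'(t)\ge\frac{A(\Gamma(t))^{p}}{\bigl(-\mu'(t)\bigr)^{p-1}}.
\end{equation*}
For the rearrangement, $\{u^\sharp_g>t\}=B_g(0,r(t))$ with $\mathrm{vol}_g(B_g(0,r(t)))=\mu(t)$, and because $u^\sharp_g$ is radial, $\vert\nabla_g u^\sharp_g\vert_g$ is constant on each geodesic sphere, so the H\"older step becomes an equality; with $F^\sharp(t)=\int_{\{u^\sharp_g>t\}}\vert\nabla_g u^\sharp_g\vert_g^p\,dv_g$ this reads
\begin{equation*}
-{F^\sharp}'(t)=\frac{\mathrm{per}_g\bigl(B_g(0,r(t))\bigr)^{p}}{\bigl(-\mu'(t)\bigr)^{p-1}}.
\end{equation*}
Under Assumption~\ref{assumption}, combined with the existence of isoperimetric regions in $(\mathbb B_n,g)$, a geodesic ball minimizes $g$-perimeter among sets of equal $g$-volume, so $A(\Gamma(t))=\mathrm{per}_g(\{u>t\})\ge\mathrm{per}_g(B_g(0,r(t)))$. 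The three displays then give $-F'(t)\ge-{F^\sharp}'(t)$ for a.e.\ $t>0$; integrating over $(0,\infty)$, using $F(\infty)=F^\sharp(\infty)=0$ and $\vert\nabla_g u\vert_g=\vert\nabla_g u^\sharp_g\vert_g=0$ a.e.\ on the respective zero sets, we obtain $\int_{\mathbb B_n}\vert\nabla_g u^\sharp_g\vert_g^p\,dv_g=F^\sharp(0)\le F(0)=\int_{\mathbb B_n}\vert\nabla_g u\vert_g^p\,dv_g$.

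The main difficulty will be the regularity bookkeeping, not the structural inequalities. We must verify that $\mu$ is locally absolutely continuous with the derivative above and that the coarea identities survive across the finitely many critical levels — this is precisely where the Morse and compact-support hypotheses on $u$, together with a preliminary density reduction, are used — and we must confirm that $u^\sharp_g$ is admissible in the same sense (in particular, of class $W^{1,p}_g$ with compact support) so that $F^\sharp$ is well defined and the coarea formula applies to it; this follows by writing $u^\sharp_g=u^*\circ\Phi\circ\rho$ with $\Phi(r)=\mathrm{vol}_g(B_g(0,r))$ a smooth, strictly increasing radial profile. Finally, it is worth stressing that Theorem~\ref{main1} by itself does not suffice here, since its constant is not sharp for geodesic balls; it is exactly the sharp comparison encoded in Assumption~\ref{assumption} that delivers $A(\Gamma(t))\ge\mathrm{per}_g(B_g(0,r(t)))$, and hence the whole chain of estimates.
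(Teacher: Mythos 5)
Your argument is correct and follows essentially the same route as the paper: coarea formula to express $-\mu'(t)$, H\"older's inequality on the level sets $\Gamma(t)$ giving $A(\Gamma(t))^p\le(-\mu'(t))^{p-1}\int_{\Gamma(t)}\vert\nabla_g u\vert_g^{p-1}\,dA_t$, equality in H\"older for the radial rearrangement since $\vert\nabla_g u^\sharp_g\vert_g$ is constant on geodesic spheres, and finally the isoperimetric Assumption~\ref{assumption} to compare $A(\Gamma(t))$ with $\mathrm{per}_g(B_g(0,r(t)))$. The only cosmetic difference is that you package the gradient integrals as functions $F(t)$, $F^\sharp(t)$ and compare their derivatives pointwise in $t$ before integrating, whereas the paper writes both $L^p$-gradient norms directly as $\int_0^\infty(\cdots)\,dt$ via coarea and compares the integrands in one chain of inequalities.
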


\begin{proof}[Proof]
By the assumption of $u$, it is clear that  $u_g^{\sharp}$ is a Lipschitz functions with compact support and by
by definition,  $u$ and $u^\sharp_g$ have the same distribution function with respect to the hyperbolic measure. Thus, one has
\begin{equation*}
\begin{aligned}
&\mathrm{vol}_{g}\left({\rm supp} \ u\right)=\mathrm{vol}_{g}\left({\rm supp}\ u^\sharp_g\right)\\
&\Vert u\Vert_{L^{\infty}(\mathbb{B}_n)}=\Vert u^\sharp_g\Vert_{L^{\infty}(\mathbb{B}_n)}
\end{aligned}
\end{equation*}
For $q\in(0,\infty)$, by the Fubini's theorem it easily follows that
\begin{equation*}
\begin{aligned}
\Vert u\Vert^q_{L^{q}(\mathbb{B}_n)}&=\int_{\mathbb{B}_n} u^q dv_g\\
&=\int_0^\infty \mathrm{vol}_{g}\left(\{z\in\mathbb{B}_n: u(z)>t^{\frac{1}{q}}\}\right) dt\\
&=\int_0^\infty \mathrm{vol}_{g}\left(\{z\in\mathbb{B}_n: u^\sharp_g (z)>t^{\frac{1}{q}}\}\right) dt\\
&=\int_{\mathbb{B}_n}(u^\sharp_g)^q dv_g(z)\\
&=\Vert u^\sharp_g\Vert_{L^{q}(\mathbb{B}_n)}^q.
\end{aligned}
\end{equation*}
Thus, we get conclusion of (i) and (ii).

To prove (iii), we will follow the arguments from Kleiner \cite{Kleiner}, Ni \cite{Ni} and Farkas \cite{FC}.  For every $0<t<\Vert u\Vert_{L^{\infty}(\mathbb{B}_n)}$, we consider the level sets
\begin{equation*}
\Gamma_t=u^{-1}(t)\subset S\subset\mathbb{B}_n,\ \ \ \ \ \Gamma_t^{\sharp}=(u^\sharp_g)^{-1}(t)\subset S\subset\mathbb{B}_{n}
\end{equation*}
which are the boundaries of the sets $\{z\in\mathbb{B}_n: u(z)>t\}$ and $\{z\in\mathbb{B}_n: u^\sharp_g(z)>t\}$, respectively.
Since $u^\sharp_g$ is radially symmetric, the set $\Gamma_t^{\sharp}$ is an $(2n-1)$-dimensional sphere for every $0<t<\Vert u\Vert_{L^{\infty}(\mathbb{B}_n)}$.
Set
\begin{equation*}
\begin{aligned}
\mu(t)&=\mathrm{vol}_{g}\left(\{z\in\mathbb{B}_n: u(z)>t\}\right)\\
&=\mathrm{vol}_{g}\left(\{z\in\mathbb{B}_n: u^\sharp_g(z)>t\}\right).
\end{aligned}
\end{equation*}
By the Coarea formula in Theorem \ref{coarea}, we have
\begin{equation*}
-\mu'(t)=\int_{\Gamma_t}\vert\nabla_g \ u\vert_g^{-1}d\sigma_g=\int_{\Gamma^{\sharp}_t}\vert\nabla_g\ u^\sharp_g\vert_g^{-1}d\sigma_g, ~{\rm a.e.} ~t\in (0, \max u)
\end{equation*}
where $d\sigma_g$  denotes the natural $(2n-1)$-dimensional surface area
element induced by the Bergman metric.
Since $\vert\nabla_g\ u^{\sharp}\vert_g$ is constant on the sphere $\Gamma^{\sharp}_t$, it turns out that
\begin{equation*}
-\mu'(t)=\frac{\mathrm{Area}_{g}(\Gamma^{\sharp}_t)}{\vert\nabla_g\ u^\sharp_g(z)\vert_g}, \ \ \ z\in\Gamma^{\sharp}_t.
\end{equation*}
Then we have
\begin{equation*}
\mathrm{Area}_{g}(\Gamma_t)=\int_{\Gamma_t} d\sigma_g\leq(-\mu'(t))^{\frac{p-1}{p}}\left(\int_{\Gamma_t}\vert\nabla_g\ u\vert_g^{p-1}d\sigma_g\right)^{\frac{1}{p}}.
\end{equation*}
Thus, we have
\begin{equation*}
\int_{\mathbb{B}_n}\vert\nabla_g\ u^\sharp_g\vert_g^p dv_g(z)=\int_0^\infty dt\int_{\Gamma^{\sharp}_t}\vert\nabla\ u^\sharp_g\vert_g^{p-1} d\sigma_g=\int_0^\infty\frac{\left(\mathrm{Area}_{g}(\Gamma^{\sharp}_t)\right)^p}{(-\mu'(t))^{p-1}}dt.
\end{equation*}
{By the isoperimetric Assumption 
\ref{assumption}, it follows that 
\begin{equation}{\mathrm {Area}}(\Gamma_t^{\sharp})\leq{\mathrm {Area}}(\Gamma_t).\end{equation}}
Thus, we have
\begin{equation*}
\begin{aligned}
\int_{\mathbb{B}_n}\vert\nabla_g\ u^\sharp_g\vert_g^p dv_g(z)
&{\leq\int_0^\infty\frac{\left(\mathrm{Area}_{g}(\Gamma_t)\right)^p}{(-\mu'(t))^{p-1}}dt}
\leq\int_0^\infty dt\int_{\Gamma_t}\vert\nabla_g\ u\vert_g^{p-1}d\sigma_g\\
&=\int_{\mathbb{B}_n}\vert\nabla_g\ u\vert_g^p dv_g(z).
\end{aligned}
\end{equation*}
\end{proof}

\subsection{Sobolev type inequalities on complex hyperbolic ball}
In this section, we will give a refinement of isoperimetric inequality on complex hyperbolic ball,  which is an improvement of Theorem \ref{main1}.

First, we  define another symmetric
decreasing function $u^\sharp_e$ on $\mathbb{C}^n$ by
\begin{equation*}
u^\sharp_e(z)=u^{*}( \Vert z\Vert^{2n}), \ \ \ \ z\in\mathbb{C}^n,
\end{equation*}
and $u^\sharp_e$ is the Euclidean rearrangement function of $u$ in $\mathbb{C}^n$, where $u^*$ is the decreasing rearrangement of $u$ with respect to the hyperbolic measure.

Motivated by Nguyen \cite{Nguyen}, we compare the two terms
$ \Vert\nabla_gu^\sharp_g\Vert^p_{L^{p}(\mathbb{B}_n)}$ and $\Vert\nabla u^\sharp_e\Vert^p_{L^{p}(\mathbb{C}^n)}$ and establish several Sobolev inequalities on the complex hyperbolic space.
\begin{theorem}\label{sobolev}
Under the Assumption \ref{assumption}, 
let $n\geq 1$ and $p\in[1,\infty)$. Then for any function $u\in W^{1,p}(\mathbb{B}_n)$, the following inequalities
holds.\\
(I) If $p=1$, then
\begin{equation}\label{p=1}
\left(\int_{\mathbb{B}_n}\vert\nabla_g\ u\vert_g dv_g\right)^2\geq
4n^2\left(\int_{\mathbb{B}_n}\vert u\vert^{\frac{2n}{2n-1}} dv_g\right)^{\frac{2n-1}{n}}+
4n^2\left(\int_{\mathbb{B}_n}\vert u\vert dv_g\right)^2.
\end{equation}
(II) If $1<p<2$ ,then
\begin{equation*}
\int_{\mathbb{B}_n}\vert\nabla_g\ u\vert_g^p dv_g\geq
\left(S(2n,p)^2\Vert u \Vert^2_{L^{\frac{2np}{2n-p}}(\mathbb{B}_n)}+
\left(\frac{2n}{p}\right)^2\Vert u\Vert^2_{L^{p}(\mathbb{B}_n)}\right)^{\frac{p}{2}}.
\end{equation*}
(III) If $n\geq 2$ and $2\leq p< 2n$, then
\begin{equation*}
\int_{\mathbb{B}_n}\vert\nabla_g\ u\vert_g^p dv_g\geq
S(2n,p)^p\left(\int_{\mathbb{B}_n}\vert u\vert^{\frac{2np}{2n-p}} dv_g\right)^{\frac{2n-p}{2n}}
+\left(\frac{2n}{p}\right)^p\int_{\mathbb{B}_n} \vert u\vert^p d v_g.
\end{equation*}
(IV) If $2n< p<\infty$, then
$$\frac{1}{2}B\left(n-\frac{(2n-1)p}{2(p-1)}, \frac{n}{p-1}\right)
\left(\int_{\mathbb{B}_n}\vert\nabla_g\ u\vert^p_g dv_g\right)^{\frac{1}{p}}\geq\sup_{z\in\mathbb{B}_n}\vert u(z)\vert,$$
where $B(P,Q)$ is the Beta function.
\end{theorem}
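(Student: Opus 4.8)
The plan is to run the Riemannian symmetrization scheme for the Bergman metric and then, in the spirit of Nguyen's comparison, to match the hyperbolic Dirichlet energy of a radial competitor against its Euclidean counterpart on $\mathbb C^n=\mathbb R^{2n}$ plus an explicit remainder, feeding the Euclidean part into the sharp Euclidean Sobolev (resp. isoperimetric) inequality and the remainder into a one–dimensional Hardy inequality. First I would reduce, by density, to the nice class of $u$ used in Theorem~\ref{PS} (non-negative, compactly supported in $\mathbb B_n$, $C^2$ on its support with only non-degenerate critical points), and then to a radial competitor: by Theorem~\ref{PS}(i)--(ii) the right-hand sides of (I)--(IV) are unchanged when $u$ is replaced by its hyperbolic rearrangement $u^\sharp_g$, and by the P\'olya--Szeg\H o inequality Theorem~\ref{PS}(iii) (for $p=1$, by the coarea formula in Theorem~\ref{coarea} together with $\mathrm{Area}_g(\Gamma^\sharp_t)\le\mathrm{Area}_g(\Gamma_t)$, which is already established in that proof) the left-hand sides do not increase; this is where Assumption~\ref{assumption} enters. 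Writing the Bergman metric in geodesic polar coordinates about $0$, the geodesic sphere of radius $\rho$ has area a fixed multiple of $\sinh^{2n-1}\rho\cosh\rho$, so in the normalization $v(\mathbb B_n)=1$ one has $\mathrm{vol}_g(B_g(0,\rho))=\sinh^{2n}\rho$. Hence, setting $\psi(R):=u^*(R^{2n})$ — so that $u^\sharp_e(z)=\psi(\vert z\vert)$ and $u^\sharp_g$ has radial profile $\rho\mapsto\psi(\sinh\rho)$ — the chain rule together with $\vert\nabla_g\rho\vert_g=1$ yields the key identity
\[ \int_{\mathbb B_n}\vert\nabla_g u^\sharp_g\vert_g^p\,dv_g=\int_0^\infty\vert\psi'(R)\vert^p\,(1+R^2)^{p/2}\,2nR^{2n-1}\,dR, \]
while Theorem~\ref{PS}(ii) and the same change of variables give $\Vert u\Vert_{L^q(dv_g)}=\Vert u^\sharp_e\Vert_{L^q(\mathbb C^n)}$ for every $q$ and $\int_{\mathbb C^n}\vert\nabla u^\sharp_e\vert^p\,dv=\int_0^\infty\vert\psi'(R)\vert^p\,2nR^{2n-1}\,dR$; thus $\cosh^p\rho=(1+R^2)^{p/2}$ is the only discrepancy between the hyperbolic and Euclidean energies.

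Next I would dominate $(1+R^2)^{p/2}$ so as to split the hyperbolic energy into a ``Euclidean'' piece and an ``$R$-weighted'' piece without losing sharp constants. For $p\ge2$ one uses the pointwise superadditivity $(1+R^2)^{p/2}\ge1+R^p$ and splits additively; for $1\le p\le2$ one instead reads $(1+R^2)^{p/2}=\Vert(1,R)\Vert_{\ell^2}^p$ and invokes Minkowski's integral inequality in the form $\Vert\cdot\Vert_{L^p(d\nu;\ell^2)}\ge\Vert\cdot\Vert_{\ell^2(L^p(d\nu))}$, valid precisely because $p\le2$, with $d\nu=2nR^{2n-1}dR$. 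In either case the Euclidean piece is $\int_0^\infty\vert\psi'\vert^p2nR^{2n-1}dR=\Vert\nabla u^\sharp_e\Vert_{L^p(\mathbb C^n)}^p$, which by the sharp Euclidean Sobolev inequality (Talenti--Aubin; for $p=1$, the Euclidean isoperimetric inequality) is $\ge S(2n,p)^p\Vert u^\sharp_e\Vert_{L^{2np/(2n-p)}(\mathbb C^n)}^p=S(2n,p)^p\Vert u\Vert_{L^{2np/(2n-p)}(dv_g)}^p$, with $S(2n,1)=2n$ in this normalization. For the $R$-weighted piece I would integrate by parts using $\psi(\infty)=0$,
\[ \int_0^\infty\psi(R)^p R^{2n-1}\,dR=\frac{p}{2n}\int_0^\infty R^{2n}\,\psi(R)^{p-1}\vert\psi'(R)\vert\,dR, \]
and then apply H\"older with exponents $p/(p-1)$ and $p$ to obtain the sharp one–dimensional Hardy inequality $\int_0^\infty\vert\psi'(R)\vert^p R^{2n-1+p}dR\ge(2n/p)^p\int_0^\infty\psi(R)^p R^{2n-1}dR$, that is, $2n\int_0^\infty\vert\psi'\vert^pR^{2n-1+p}dR\ge(2n/p)^p\Vert u\Vert_{L^p(dv_g)}^p$.

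Combining the two estimates gives (III) at once (addition, in the range $n\ge2$, $2\le p<2n$); it gives (II) after inserting them into the Minkowski bound and using monotonicity of $t\mapsto t^{p/2}$; and it gives (I) from the $p=1$ case of Minkowski (the triangle inequality for $\mathbb R^2$-valued integrals), in which the Euclidean piece squared is $S(2n,1)^2\Vert u\Vert^2_{L^{2n/(2n-1)}(dv_g)}=4n^2\Vert u\Vert^2_{L^{2n/(2n-1)}(dv_g)}$ and the $R$-piece squared is $\bigl(2n\int_0^\infty\vert\psi'\vert2nR^{2n}dR\bigr)^2=(2n\Vert u\Vert_{L^1(dv_g)})^2=4n^2\Vert u\Vert^2_{L^1(dv_g)}$, the inner identity again coming from integration by parts with $p=1$. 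For (IV), when $p>2n$ there is no critical exponent; instead $\sup_{\mathbb B_n}\vert u\vert=u^*(0)=\psi(0)=\int_0^\infty\vert\psi'(R)\vert dR$, and H\"older with exponents $p$ and $p'=p/(p-1)$ against the weight $(1+R^2)^{1/2}(2nR^{2n-1})^{1/p}$ bounds this by $\bigl(\int_{\mathbb B_n}\vert\nabla_g u^\sharp_g\vert_g^pdv_g\bigr)^{1/p}$ times a power of $\int_0^\infty(1+R^2)^{-p'/2}(2nR^{2n-1})^{-p'/p}dR$; the substitution $t=R^2$ turns the latter into an explicit Beta integral, convergent exactly when $p>2n$, equal to a constant multiple of $B\bigl(n-\tfrac{(2n-1)p}{2(p-1)},\tfrac{n}{p-1}\bigr)$, which yields (IV) after collecting the constants.

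The main obstacle is the bookkeeping at the interface. Producing the key identity of the first paragraph fixes the weight $(1+R^2)^{p/2}$ and forces the normalizations of $u^\sharp_e$, of $dv$, and of the sharp constant $S(2n,p)$ to be chosen compatibly; and for each range of $p$ one must select exactly the right elementary bound for $(1+R^2)^{p/2}$ so that both the sharp Sobolev constant $S(2n,p)$ and the sharp Hardy constant $(2n/p)^p$ survive and recombine into the stated right-hand sides — the \emph{reverse} direction of the $L^p(\ell^2)$–$\ell^2(L^p)$ comparison for $p\le2$ being the one genuinely non-obvious elementary input. Everything else (the P\'olya--Szeg\H o reduction, the Euclidean Sobolev and isoperimetric inequalities, the one–dimensional Hardy inequality, and the evaluation of the Beta integral) is standard.
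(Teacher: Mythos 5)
Your proposal is correct and follows essentially the same route as the paper: reduce to radial competitors via the Pólya--Szeg\H o inequality (Assumption~\ref{assumption}), derive the identity $\int_{\mathbb B_n}\vert\nabla_g u^\sharp_g\vert_g^p\,dv_g=(2n)^p\int_0^\infty\vert(u^*)'(s)\vert^p s^{(2n-1)p/(2n)}(1+s^{1/n})^{p/2}\,ds$ (your $R$-variable formulation with $s=R^{2n}$ is the same), split off $\Vert\nabla u^\sharp_e\Vert_{L^p}^p$, and feed the two pieces into the sharp Euclidean Sobolev inequality and the one-dimensional weighted Hardy inequality, then Hölder against $\ell(s)$ for the case $p>2n$. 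The only stylistic divergences from the paper are that you invoke the mixed-norm Minkowski inequality $\Vert\cdot\Vert_{L^p(\ell^2)}\ge\Vert\cdot\Vert_{\ell^2(L^p)}$ where the paper uses the elementary variational identity $(a+b)^\alpha=\sup_t\bigl(t^{1-\alpha}a^\alpha+(1-t)^{1-\alpha}b^\alpha\bigr)$ (these are the same fact), you derive the Hardy inequality by integration by parts plus Hölder rather than citing Hardy--Littlewood--Pólya, and you prove (I) directly at $p=1$ whereas the paper obtains it by letting $p\to1^+$ in (II); none of these affect the substance.
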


From the Sobolev inequality \eqref{p=1} (refer to Osserman \cite{Osserman}) we can obtain another isoperimetric inequality on complex hyperbolic ball.
\begin{theorem}\label{Iso}
Under the Assumption \ref{assumption},
let $D$ be a relatively compact subset  in  the complex hyperbolic ball $(\mathbb{B}_n, g)$. Then we have
\begin{equation*}
(\mathrm{per}_g(D))^2\geq 4n^2 (\mathrm{vol}_g(D))^{\frac{2n-1}{n}}+4n^2 (\mathrm{vol}_g(D))^2.
\end{equation*}
The equality holds if and only if $D$ is a geodesic ball in $(\mathbb B_n ,g)$.
\end{theorem}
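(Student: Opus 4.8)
The plan is to deduce Theorem~\ref{Iso} from the case $p=1$ of Theorem~\ref{sobolev} by feeding the characteristic function $\chi_D$ of $D$ into the Sobolev inequality \eqref{p=1}. We may assume $\mathrm{per}_g(D)<\infty$, since otherwise there is nothing to prove. Because $\chi_D$ is a bounded $BV$ function but, in general, not of class $W^{1,1}$, I would first choose $u_k\in C_c^\infty(\mathbb{B}_n)$ with $0\le u_k\le 1$, $u_k\to\chi_D$ in $L^1(dv_g)$, and $\int_{\mathbb{B}_n}\vert\nabla_g u_k\vert_g\,dv_g\to\mathrm{per}_g(D)$ --- the standard smooth approximation of a set of finite perimeter on a Riemannian manifold (localize by a partition of unity and mollify in charts). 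Since $0\le u_k\le1$, $L^1$-convergence upgrades to $L^q$-convergence for every finite $q$, so $\int_{\mathbb{B}_n}\vert u_k\vert\,dv_g\to\mathrm{vol}_g(D)$ and $\int_{\mathbb{B}_n}\vert u_k\vert^{\frac{2n}{2n-1}}\,dv_g\to\int_{\mathbb{B}_n}\chi_D\,dv_g=\mathrm{vol}_g(D)$. Applying \eqref{p=1} to each $u_k$ and passing to the limit therefore gives
$$\mathrm{per}_g(D)^2\ \ge\ 4n^2\,\mathrm{vol}_g(D)^{\frac{2n-1}{n}}+4n^2\,\mathrm{vol}_g(D)^2 .$$
(Equivalently, one may first extend \eqref{p=1} from $W^{1,1}$ to $BV$ by lower semicontinuity of the total variation together with Fatou's lemma, and then apply it directly to $\chi_D$.)

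For the equality statement, the ``if'' direction is a direct computation (by homogeneity of the Bergman metric it suffices to treat balls centered at $0$). In the paper's normalization the geodesic distance from $z$ to $0$ is $\tfrac12\ln\tfrac{1+\vert z\vert}{1-\vert z\vert}$, so $B_g(0,r)=\{\vert z\vert<\tanh r\}$; a change to polar coordinates followed by the substitution $t=\vert z\vert^2/(1-\vert z\vert^2)$ in $dv_g=(1-\vert z\vert^2)^{-(n+1)}dv$, together with $v(\mathbb{B}_n)=1$, gives $\mathrm{vol}_g(B_g(0,r))=\sinh^{2n}r$, and differentiating in $r$ gives $\mathrm{per}_g(B_g(0,r))=2n\sinh^{2n-1}r\cosh r$. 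Using $\cosh^2r=1+\sinh^2r$,
\begin{align*}
\mathrm{per}_g(B_g(0,r))^2 &= 4n^2\sinh^{4n-2}r\,\cosh^2r = 4n^2\sinh^{4n-2}r+4n^2\sinh^{4n}r\\
&= 4n^2\,\mathrm{vol}_g(B_g(0,r))^{\frac{2n-1}{n}}+4n^2\,\mathrm{vol}_g(B_g(0,r))^2 ,
\end{align*}
so equality holds on geodesic balls. For the ``only if'' direction, $V\mapsto\big(4n^2V^{\frac{2n-1}{n}}+4n^2V^2\big)^{1/2}$ is strictly increasing in $V$, and since the inequality just proved holds with equality on geodesic balls, this map coincides with the isoperimetric profile of $(\mathbb{B}_n,g)$; hence any $D$ realizing equality is an isoperimetric region of volume $\mathrm{vol}_g(D)$, and by Assumption~\ref{assumption} it is a geodesic ball.

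Concerning difficulty: the substantive content lies entirely in Theorem~\ref{sobolev} (and, through it, in the P\'olya--Szeg\H o inequality of Theorem~\ref{PS} and the isoperimetric inequality of Theorem~\ref{main1}), so the present deduction is essentially routine. The two points that need care are (a) justifying that \eqref{p=1}, valid a priori only for $W^{1,1}$ functions, may be applied to $\chi_D$ --- i.e.\ producing a smooth approximating sequence whose gradient $L^1$-norms \emph{converge} to $\mathrm{per}_g(D)$ rather than merely bounding it from below, which is the standard relaxation theory for sets of finite perimeter on a manifold; and (b) the equality analysis, where one must carry out the geodesic-ball computation correctly (the normalization of $dv_g$ enters) and appeal to Assumption~\ref{assumption} to pass from ``isoperimetric region'' to ``geodesic ball.''
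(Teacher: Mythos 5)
Your proposal is correct and takes exactly the route the paper intends: the paper deduces Theorem~\ref{Iso} from the $p=1$ case of Theorem~\ref{sobolev} by applying it to (approximations of) $\chi_D$, referring to Osserman for this standard passage between $L^1$-Sobolev and isoperimetric inequalities, and your BV/relaxation argument together with the geodesic-ball computation and the appeal to Assumption~\ref{assumption} for the equality case fills in precisely the details the paper leaves to the reader.
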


Before the proof of Theorem \ref{sobolev}, we first recall the following two lemmas about the weighted Hardy inequality and the sharp Sobolev inequality in $\mathbb R^{n}$.
\begin{lemma}(\cite[Theorem 330]{HLP})
Assume that $p>1$ and $\varepsilon>p-1$, and then for any measurable non-negative function $f$, we have
\begin{equation*}
\left(\frac{p}{\varepsilon+1-p}\right)^p\int_0^\infty f^p(x)x^\varepsilon dx\geq \int_0^\infty\left(\frac{1}{x}\int_0^\infty f(t)dt\right)^p x^\varepsilon dx,
\end{equation*}
and the constant $\left(\frac{p}{\varepsilon+1-p}\right)^p$ is sharp.
\end{lemma}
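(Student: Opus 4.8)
The plan is to recover the classical weighted Hardy inequality of Hardy--Littlewood--P\'olya \cite{HLP}; here the averaging operator on the right-hand side is the dual Hardy operator $f\mapsto\frac1x\int_x^\infty f(t)\,dt$, for which the displayed constant and the restriction $\varepsilon>p-1$ are the natural ones. First I would reduce to the case of $f$ continuous, bounded, and supported in a compact subinterval $[a,A]\subset(0,\infty)$: the general case then follows by applying the inequality to $f_k:=\min(f,k)\,\mathbf{1}_{[1/k,k]}$ (mollified) and letting $k\to\infty$ by monotone convergence. For such $f$, put $F(x):=\frac1x\int_x^\infty f(t)\,dt$, so that $xF(x)=\int_x^\infty f$ equals the constant $c:=\int_0^\infty f$ for $x\le a$ and vanishes for $x\ge A$; in particular $F(x)=c/x$ near $0$ and $F\equiv0$ near $\infty$, so $F(x)^p x^\varepsilon=c^p x^{\varepsilon-p}$ near $0$ with $\int_0^1 x^{\varepsilon-p}\,dx<\infty$ (this is the point where $\varepsilon>p-1$ enters). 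Hence $I:=\int_0^\infty F^p x^\varepsilon\,dx<\infty$, and we may assume $I>0$.

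The core is an exact identity for $I$. Differentiating $xF(x)=\int_x^\infty f$ gives $F(x)+xF'(x)=-f(x)$. Writing $x^\varepsilon\,dx=\frac{1}{\varepsilon+1}\,d(x^{\varepsilon+1})$ and integrating by parts,
\[
I=\frac{1}{\varepsilon+1}\Bigl[x^{\varepsilon+1}F^p\Bigr]_0^\infty-\frac{p}{\varepsilon+1}\int_0^\infty x^{\varepsilon+1}F^{p-1}F'\,dx ,
\]
where the boundary term vanishes: at $\infty$ because $F$ is eventually zero, and at $0$ because $x^{\varepsilon+1}F^p=c^p x^{\varepsilon+1-p}\to0$ as $x\to0^+$, using $\varepsilon+1-p>0$. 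Substituting $x^{\varepsilon+1}F'=x^\varepsilon\,(xF')=x^\varepsilon(-f-F)$ and collecting the two terms involving $I$ (legitimate since $\varepsilon+1-p>0$ forces $\tfrac{p}{\varepsilon+1}\ne1$) gives
\[
I=\frac{p}{\varepsilon+1-p}\int_0^\infty x^\varepsilon F^{p-1}f\,dx .
\]
Next I would apply H\"older's inequality with exponents $\tfrac{p}{p-1}$ and $p$, splitting $x^\varepsilon=x^{\varepsilon(p-1)/p}\cdot x^{\varepsilon/p}$, to get $\int_0^\infty x^\varepsilon F^{p-1}f\,dx\le I^{(p-1)/p}\bigl(\int_0^\infty x^\varepsilon f^p\,dx\bigr)^{1/p}$. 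Dividing by $I^{(p-1)/p}\in(0,\infty)$ and raising to the $p$-th power yields exactly
\[
\int_0^\infty\Bigl(\tfrac1x\int_x^\infty f(t)\,dt\Bigr)^p x^\varepsilon\,dx\le\Bigl(\tfrac{p}{\varepsilon+1-p}\Bigr)^p\int_0^\infty f^p(x)\,x^\varepsilon\,dx .
\]

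For sharpness of the constant I would test with $f_\delta(x):=x^{-\frac{\varepsilon+1}{p}-\delta}\mathbf{1}_{[1,\infty)}(x)$, $\delta>0$ small. A direct computation gives $F_\delta(x)=c_\delta\,x^{-\frac{\varepsilon+1}{p}-\delta}$ for $x\ge1$ and $F_\delta(x)=c_\delta/x$ for $0<x<1$, where $c_\delta=\bigl(\tfrac{\varepsilon+1}{p}+\delta-1\bigr)^{-1}\to\tfrac{p}{\varepsilon+1-p}$, so that
\[
\frac{\int_0^\infty F_\delta^p\,x^\varepsilon\,dx}{\int_0^\infty f_\delta^p\,x^\varepsilon\,dx}=c_\delta^{\,p}\Bigl(1+\frac{p\delta}{\varepsilon-p+1}\Bigr)\ \xrightarrow{\,\delta\to0^+\,}\ \Bigl(\frac{p}{\varepsilon+1-p}\Bigr)^p ,
\]
so no smaller constant is admissible. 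I do not expect a real obstacle here, since this is a textbook fact; the only points that need care are the approximation in the first step and the vanishing of the boundary term in the integration by parts (both resting on $\varepsilon>p-1$), together with pinning down the critical exponent $\frac{\varepsilon+1}{p}$ in the extremal family.
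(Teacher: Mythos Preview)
Your argument is correct; it is precisely the classical proof via integration by parts followed by H\"older's inequality, with the near-extremal power family for sharpness. The paper itself gives no proof of this lemma: it is quoted as \cite[Theorem~330]{HLP} and used as a black box. You also correctly read the inner integral on the right-hand side as $\int_x^\infty f(t)\,dt$ (the printed $\int_0^\infty$ is a misprint), which is consistent with the way the lemma is applied with $\varepsilon=p$ to $f=|(u^*)'|$ in the proof of the Sobolev-type inequalities.
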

\begin{lemma}[Aubin \cite{Aubin} and Talenti \cite{Talenti}]
For $1<p<n$ and $p^*=\frac{np}{n-p}$, we have
$$\Vert\nabla u\Vert_{L^p(\mathbb{R}^n)}\geq S(n,p)\Vert u\Vert_{L^{p*}(\mathbb{R}^n)}, \ \ \ u\in C_0^\infty(\mathbb{R}^n),$$
and the sharp constant $S(n,p)$  is given by
\begin{equation*}
S(n,p)=\left(\frac{1}{n}\left(\frac{n(p-1)}{n-p}\right)^{1-\frac{1}{p}}\left(\frac{\Gamma(n)}{\Gamma\left(\frac{n}{p}\right)\Gamma\left(n+1-\frac{n}{p}\right)}\right)^{\frac{1}{n}}
\right)^{-1}.
\end{equation*}
\end{lemma}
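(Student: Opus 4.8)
The plan is the classical symmetrization argument of Talenti and Aubin. First I would reduce to the radial case. Since $\abs{\nabla\abs{u}}=\abs{\nabla u}$ a.e.\ while $\norm{\abs{u}}_{L^{p^{*}}(\mathbb{R}^{n})}=\norm{u}_{L^{p^{*}}(\mathbb{R}^{n})}$, we may assume $u\geq 0$; passing to the Euclidean symmetric decreasing rearrangement $u^{*}$ on $\mathbb{R}^{n}$, equimeasurability gives $\norm{u^{*}}_{L^{p^{*}}(\mathbb{R}^{n})}=\norm{u}_{L^{p^{*}}(\mathbb{R}^{n})}$, while the Euclidean P\'olya--Szeg\H{o} inequality gives $\norm{\nabla u^{*}}_{L^{p}(\mathbb{R}^{n})}\leq\norm{\nabla u}_{L^{p}(\mathbb{R}^{n})}$. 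Hence it suffices to prove the inequality, with the same constant, for nonnegative, radially symmetric, nonincreasing $u$.

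For such a $u$, writing $u=u(r)$ with $r=\abs{x}$ and letting $\sigma_{n-1}=\mathrm{vol}(\mathbf{S}^{n-1})$, the two sides become
\begin{equation*}
\int_{\mathbb{R}^{n}}\abs{\nabla u}^{p}\,dx=\sigma_{n-1}\int_{0}^{\infty}\abs{u'(r)}^{p}r^{n-1}\,dr,\qquad
\int_{\mathbb{R}^{n}}\abs{u}^{p^{*}}\,dx=\sigma_{n-1}\int_{0}^{\infty}u(r)^{p^{*}}r^{n-1}\,dr,
\end{equation*}
so the problem reduces to the sharp one-dimensional weighted inequality
\begin{equation*}
\int_{0}^{\infty}\abs{u'(r)}^{p}r^{n-1}\,dr\;\geq\;C\left(\int_{0}^{\infty}u(r)^{p^{*}}r^{n-1}\,dr\right)^{p/p^{*}}
\end{equation*}
with the optimal constant $C$; this one-dimensional sharp inequality is due to Bliss. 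I would establish it by the direct method: a minimizing radial profile satisfies the Euler--Lagrange equation $\left(r^{n-1}\abs{u'}^{p-2}u'\right)'+\lambda\, r^{n-1}u^{p^{*}-1}=0$, whose positive decreasing solutions are, after scaling, exactly the Aubin--Talenti profiles $u(r)=\left(1+r^{p/(p-1)}\right)^{-(n-p)/p}$. Substituting this profile and evaluating the resulting integrals via the substitution $t=r^{p/(p-1)}$, which turns them into Beta integrals, yields the optimal $C$, and unwinding the dimensional factor $\sigma_{n-1}$ recovers the constant $S(n,p)$ as displayed.

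Finally, the equality case is read off backwards: equality forces equality in Bliss's inequality, hence $u^{*}$ is, up to a dilation and a positive multiple, an Aubin--Talenti profile, and equality in P\'olya--Szeg\H{o} then forces $u$ itself to coincide, up to translation, with such a profile. The step I expect to be the main obstacle is the sharp-constant bookkeeping: one must verify that the explicit Euler--Lagrange solution actually realizes the infimum in Bliss's inequality (for instance by reducing that inequality, after the above change of variables, to a genuinely one-variable minimization solved by elementary calculus) and then carry out the Gamma-function computation accurately so that it matches the closed form above. A secondary technical point is the justification of the equality case in the Euclidean P\'olya--Szeg\H{o} inequality, which requires controlling the measure of the critical set of $u$ on the range $0<u<\max u$.
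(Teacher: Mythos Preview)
The paper does not give its own proof of this lemma: it is stated with attribution to Aubin and Talenti and then used as a black box in the proof of Theorem~\ref{sobolev}. Your sketch follows precisely the classical symmetrization argument of those original references (reduction to radial functions via P\'olya--Szeg\H{o}, then the one-dimensional sharp weighted inequality of Bliss solved by the Aubin--Talenti profiles), so there is nothing to compare and your outline is the standard correct proof.
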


\begin{proof}[Proof of Theorem \ref{sobolev}]
By the classical Morse inequality  and density property we can assume that $u$ is a function with above properties (i.e., it is continuous with compact $S\subset\mathbb B_n$, $S$ being smooth enough and $u$ is of class $C^2$-smooth with only non-degenerate critical points in $S$). It is clear that $u^\ast(t)$ is an absolutely continuous function on $(0, \infty)$.
By a straightforward computation, we have
\begin{equation}\label{E1}
\begin{aligned}
\int_{\mathbb{C}^n}\vert\nabla u^\sharp_e\vert^pdv(z)&=2n\int_0^\infty\left\vert(u^*)'( \Vert z\Vert^{2n})\right\vert^p
(2n r^{2n-1})^p r^{2n-1} dr\\
&=(2n)^p\int_0^\infty\vert(u^*)'(s)\vert^p s^{\frac{(2n-1)p}{2n}}ds
\end{aligned}
\end{equation}
On the other hand, we have
\begin{equation*}
\begin{aligned}
\mathrm{vol_g}(B_g(0,\rho(z)))&=\int_{\vert w\vert<\tanh \rho(z)}dv_g(w)\\
&=\int_{\vert w\vert<\tanh \rho(z)}\frac{1}{(1-\vert w\vert^2)^{n+1}}dv(w)\\
&=2n\int_0^{\tanh\rho(z)}\frac{r^{2n-1}}{(1-r^2)^{n+1}}dr\\
&=2n\int_0^{\rho(z)}(\tanh s)^{2n-1}(\cosh^2 s)^{n+1}\frac{1}{\cosh^2 s} ds\\
&=\big(\sinh\rho(z)\big)^{2n}.
\end{aligned}
\end{equation*}
Then the gradient of $\mathrm{vol_g}(B_g(0,\rho(z)))$ is  given by
\begin{equation*}
\nabla_g \mathrm{vol_g}(B_g(0,\rho(z)))=2n\big(\sinh\rho(z)\big)^{2n-1}\cosh\rho(z)\nabla_g\rho(z).
\end{equation*}
Since $\vert\nabla_g\rho(z)\vert_g=1$ for $z\neq 0$,  we deduce that
\begin{equation*}
\begin{split}
&\int_{\mathbb{B}_n}\vert\nabla_gu^\sharp_g(z)\vert^p_gdv_g(z)\\
&=\int_{\mathbb{B}_n}\vert(u^*)'({\rm vol}_g(B_g(0, \rho(z))))\vert^p\left(
2n\big(\sinh\rho(z)\big)^{2n-1}\cosh\rho(z)\right)^p dv_g(z)\\
&=2n\int_0^\infty\vert(u^*)'({\rm vol}_g(B_g(0, \rho(z))))\vert^p
\left(2n
\big(\sinh t\big)^{2n-1}{\cosh t}\right)^p(\sinh t)^{2n-1}\cosh t dt.
\end{split}
\end{equation*}
Making the change of variable $s=\mathrm{vol_g}(B_g(0,t)))=\left(\sinh t\right)^{2n}$,  we have
$$ds= 2n(\sinh t)^{2n-1}{\cosh t}dt.$$
Thus,
\begin{equation}\label{E2}
\begin{aligned}
\int_{\mathbb{B}_n}\vert\nabla_gu^\sharp_g(z)\vert^p_gdv_g(z)
=&
(2n)^p\int_0^\infty\vert(u^*)'(s)\vert^p s^{\frac{(2n-1)p}{2n}}\left(1+s^{\frac{1}{n}}\right)^{\frac{p}{2}}ds\\
=&\Vert\nabla u^\sharp_e\Vert^p+(2n)^p\int_0^\infty\vert(u^*)'(s)\vert^p k_{n,p}(s)ds,
\end{aligned}
\end{equation}
where
\begin{equation*}
k_{n,p}(s)=s^{\frac{(2n-1)p}{2n}}\left((1+s^{\frac{1}{n}})^{\frac{p}{2}}-1\right).
\end{equation*}
It is easy to check that for $\alpha\in(0,1)$, the following holds
\begin{equation*}
(a+b)^\alpha=\sup_{t\in[0,1]}(t^{1-\alpha}a^\alpha+(1-t)^{1-\alpha}b^\alpha).
\end{equation*}
Hence, for $1< p<2$,  we have
\begin{equation*}
\begin{aligned}
&\int_{\mathbb{B}_n}\vert\nabla_gu^\sharp_g(z)\vert^p_gdv_g(z)\\
=&
(2n)^p\int_0^\infty\vert(u^*)'(s)\vert^p s^{\frac{(2n-1)p}{2n}}\left(1+s^{\frac{1}{n}}\right)^{\frac{p}{2}}ds\\
\geq&\  t^{1-\frac{p}{2}}\left((2n)^p\int_0^\infty\vert(u^*)'(s)\vert^p s^{\frac{(2n-1)p}{2n}}ds\right)
+ \ (1-t)^{1-\frac{p}{2}}\left((2n)^p\int_0^\infty\vert(u^*)'(s)\vert^p s^pds\right)\\
\geq& \ t^{1-\frac{p}{2}}\left(\Vert\nabla u^\sharp_e\Vert^p_{L^p(\mathbb{C}^n)}\right) +(1-t)^{1-\frac{p}{2}}\left(\left(\frac{2n}{p}\right)^p
\int_0^\infty\vert u^*(s)\vert^pds\right)\\
\geq&  \ t^{1-\frac{p}{2}}\left(S(2n,p)^{p}\Vert u^\sharp_e\Vert^p_{L^{\frac{2np}{2n-p}}(\mathbb{C}^n)}\right)+(1-t)^{1-\frac{p}{2}}\left(\left(\frac{2n}{p}\right)^p\Vert u^\sharp_g\Vert^p_{L^{p}(\mathbb{B}_n)}\right).
\end{aligned}
\end{equation*}
Here, we used a weighted Hardy inequality  with $\varepsilon=p$ for the first inequality, and the sharp Sobolev inequality in $\mathbb{R}^{2n}$ for the second inequality. 

Notice that $$\Vert u \Vert_{L^{p}(\mathbb{B}_n)}=\Vert u^\sharp_g\Vert_{L^{p}(\mathbb{B}_n)}, ~~
\Vert u \Vert_{L^{\frac{2np}{2n-p}}(\mathbb{B}_n)}=\Vert u^\sharp_e\Vert_{L^{\frac{2np}{2n-p}}(\mathbb{C}^n)}.$$ Thus, it follows that for every $t\in[0,1]$, we have
\begin{equation*}
\int_{\mathbb{B}_n}\vert\nabla_g\ u\vert_g^p dv_g\geq
t^{1-\frac{p}{2}}\left[S(2n,p)^{p}\Vert u \Vert^p_{L^{\frac{2np}{2n-p}}(\mathbb{B}_n)}\right]+
(1-t)^{1-\frac{p}{2}}\left[\left(\frac{2n}{p}\right)^p\Vert u\Vert^p_{L^{p}(\mathbb{B}_n)}\right].
\end{equation*}
Therefore, we deduce that
\begin{equation*}
\int_{\mathbb{B}_n}\vert\nabla_g\ u\vert_g^p dv_g\geq
\left[S(2n,p)^2\Vert u \Vert^2_{L^{\frac{2np}{2n-p}}(\mathbb{B}_n)}+
\left(\frac{2n}{p}\right)^2\Vert u\Vert^2_{L^{p}(\mathbb{B}_n)}\right]^{\frac{p}{2}}.
\end{equation*}
Letting $p\rightarrow 1^+$, we conclude that
\begin{equation*}
\left(\int_{\mathbb{B}_n}\vert\nabla_g\ u\vert_g dv_g\right)^2\geq
(2n)^2\left(\int_{\mathbb{B}_n}\vert u\vert^{\frac{2n}{2n-1}} dv_g\right)^{\frac{2n-1}{n}}+
(2n)^2\left(\int_{\mathbb{B}_n}\vert u\vert dv_g\right)^2.
\end{equation*}
Assume that $n\geq 2$. For $2\leq p< 2n$, we have
$$k_{n,p}(t)\geq t^p.$$
Thus,
\begin{equation*}
\begin{aligned}
\int_{\mathbb{B}_n}\vert\nabla_gu^\sharp_g(z)\vert^p_gdv_g(z)&\geq
\Vert\nabla u^\sharp_e\Vert^p+(2n)^p\int_0^\infty\vert(u^*)'(s)\vert^p s^pds\\
&\geq \Vert\nabla u^\sharp_e\Vert^p+\left(\frac{2n}{p}\right)^p
\int_0^\infty\vert u^*(s)\vert^pds\\
&\geq S(2n,p)^{p}\Vert u^\sharp_e\Vert^p_{L^{\frac{2np}{2n-p}}(\mathbb{C}^n)}+
\left(\frac{2n}{p}\right)^p\Vert u^\sharp_g\Vert^p_{L^{p}(\mathbb{B}_n)}.
\end{aligned}
\end{equation*}
Hence,  we deduce that
\begin{equation*}
\int_{\mathbb{B}_n}\vert\nabla_g\ u\vert_g^p dv_g\geq
S(2n,p)^p\left(\int_{\mathbb{B}_n}\vert u\vert^{\frac{2np}{2n-p}} dv_g\right)^{\frac{2n-p}{2n}}
+\left(\frac{2n}{p}\right)^p\int_{\mathbb{B}_n} \vert u\vert^p d v_g.
\end{equation*}
For $2n< p<\infty$, set
$$\ell(s)=s^{\frac{2n-1}{2n}}\left(1+s^{\frac{1}{n}}\right)^{\frac{1}{2}}.$$
It is easy to check that
$$\int_0^{\infty}\ell(s)^{-\frac{p}{p-1}} ds <\infty.$$
Moreover, we have
\begin{equation*}
    \begin{aligned}
\int_0^{\infty}\ell(s)^{-\frac{p}{p-1}} ds&=n\int_0^\infty        u^{n-1-\frac{(2n-1)p}{2(p-1)}}(1+u)^{-\frac{p}{2(p-1)}} du\\
&=nB\left(n-\frac{(2n-1)p}{2(p-1)}, \frac{n}{p-1}\right),
    \end{aligned}
\end{equation*}
where $B(P,Q)$ is the Beta function.

As we know, the rearrangement function
$u^{*}$ is decreasing, and $\lim_{t\rightarrow\infty} u^{*}(t)=0.$
Hence, we deduce that
$$u^*(t)=-\int_t^\infty
(u^*)'(x)dx=-\int_t^\infty (u^*)'(x) \ell(x) \ell^{-1}(x)dx.$$

Therefore, we have 
\begin{equation*}
    \begin{aligned}
\sup_{z\in\mathbb{B}_n}\vert u(z)\vert= u^*(0)&\leq\left(\int_0^\infty\vert (u^*)'(s)\vert^p\ell(s)^pds\right)^{\frac{1}{p}}\left(\int_0^{\infty}\ell(s)^{-\frac{p}{p-1}} ds\right)^{\frac{p-1}{p}}\\
   &=\frac{1}{2n}\left(\int_{\mathbb{B}_n}\vert\nabla_gu^\sharp_g(z)\vert^p_gdv_g(z)\right)^{\frac{1}{p}}\left(\int_0^{\infty}\ell(s)^{-\frac{p}{p-1}} ds\right)^{\frac{p-1}{p}} \  \ \mathrm{(By \ \  \eqref{E2})}\\
   &\leq\frac{1}{2}B\left(n-\frac{(2n-1)p}{2(p-1)}, \frac{n}{p-1}\right)
\left(\int_{\mathbb{B}_n}\vert\nabla_g\ u\vert^p_g dv_g\right)^{\frac{1}{p}}.
    \end{aligned}
\end{equation*}
\end{proof}

\section{Monotonicity for the Superlevel Sets}\label{section3}

In this section, we always suppose the Assumption \ref{assumption} is satisfied. We will prove a general monotonicity theorem for the hyperbolic measure of the superlevel sets of holomorphic functions which used an ingenious method from \cite{NT22} to the complex hyperbolic ball. 

Let $f$ be a holomorphic function in  $\mathbb{B}_n$ such that $u(z)=\vert f(z)\vert^{a}(1-\vert z\vert^2)^b$ is bounded and goes to $0$
uniformly as $ z\rightarrow \mathbb{S}_n$.
Then the superlevel sets
$$A_t:=\{z\in\mathbb{B}_n: u(z)>t\}$$
for $t>0$ are relatively compact subsets of  $\mathbb{B}_n$, and  thus have finite hyperbolic measure
$$\mu(t)=v_g(A_t).$$
The goal of this section is to prove the following theorem which says
that a certain quantity related to this measure is decreasing.
\begin{theorem}\label{mon}
With Assumption \ref{assumption},
let $f:\mathbb{B}_n\rightarrow\mathbb{C}$ be a holomorphic function such that the function
$$u(z)=\vert f(z)\vert^{a}(1-\vert z\vert^2)^b$$ is bounded, and  tends to $0$
uniformly as $ z\rightarrow \mathbb{S}_n$.
Then the function $$g(t)=t^{\frac{1}{b}}(\mu^{\frac{1}{n}}+1)$$ is decreasing on the interval $(0,t_0)$, where
$t_0=\max_{z\in\mathbb{B}_n}u(z)$.
\end{theorem}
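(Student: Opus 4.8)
The plan is to deduce Theorem~\ref{mon} from a pointwise differential inequality for $\mu$: I would show that for almost every $t\in(0,t_0)$,
\[
bt\,\bigl(-\mu'(t)\bigr)\ \geq\ n\,\mu(t)+n\,\mu(t)^{\frac{n-1}{n}}.
\]
Dividing by $t^{1/b-1}>0$, this is exactly $g'(t)\le 0$; and since $\mu$ is non-increasing — so that $g=t^{1/b}(\mu^{1/n}+1)$ is of bounded variation on every compact subinterval of $(0,t_0)$ with non-positive singular part (the factor $t^{1/b}$ being smooth and positive) — the almost-everywhere inequality forces $g$ to be non-increasing on $(0,t_0)$. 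For the setup: since $u\to 0$ uniformly as $z\to\mathbb{S}_n$, each $A_t$ ($t>0$) is relatively compact in $\mathbb{B}_n$, so $0<\mu(t)<\infty$ for $t<t_0$; since $u>0$ on $\overline{A_t}$ and $u$ is bounded, $f$ has no zero on $\overline{A_t}$, so $u$ and $\log u=a\log\lvert f\rvert+b\log(1-\lvert z\rvert^2)$ are real-analytic near $\overline{A_t}$. By Sard's theorem almost every $t\in(0,t_0)$ is a regular value; for such $t$, $\partial A_t=u^{-1}(t)$ is a smooth closed hypersurface with $\lvert\nabla_g u\rvert_g>0$ on it, and the coarea formula (Theorem~\ref{coarea}) gives $-\mu'(t)=\int_{\partial A_t}\lvert\nabla_g u\rvert_g^{-1}\,d\sigma_g$.

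The crux will be an exact evaluation of $\int_{\partial A_t}\lvert\nabla_g u\rvert_g\,d\sigma_g$. On $A_t$ the function $\log\lvert f\rvert$ is pluriharmonic, hence annihilated by the Laplace--Beltrami operator of the Kähler metric $g$: $\Delta_g\log\lvert f\rvert=0$ there. The function $\log(1-\lvert z\rvert^2)$ is radial, and a direct computation — e.g.\ from the radial formula for $\Delta_g$ together with the identity $\mathrm{vol}_g\bigl(B_g(0,\rho)\bigr)=(\sinh\rho)^{2n}$ used in Section~\ref{section2}, or from the fact that $-\log(1-\lvert z\rvert^2)$ is a global Kähler potential for $g$ — gives $\Delta_g\log(1-\lvert z\rvert^2)=-4n$. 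Hence $\Delta_g\log u\equiv -4nb$ on $A_t$. Since $\log u$ equals $\log t$ on $\partial A_t$ and $\nabla_g\log u$ points into $A_t$ there, the divergence theorem gives
\[
\int_{\partial A_t}\lvert\nabla_g\log u\rvert_g\,d\sigma_g=-\int_{A_t}\dive_g\!\bigl(\nabla_g\log u\bigr)\,dv_g=-\int_{A_t}\Delta_g\log u\,dv_g=4nb\,\mu(t).
\]
As $u\equiv t$ on $\partial A_t$ we have $\lvert\nabla_g u\rvert_g=t\,\lvert\nabla_g\log u\rvert_g$ there, and therefore $\int_{\partial A_t}\lvert\nabla_g u\rvert_g\,d\sigma_g=4nbt\,\mu(t)$.

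Then I would combine three ingredients. Cauchy--Schwarz on $(\partial A_t,d\sigma_g)$ gives
\[
\mathrm{per}_g(A_t)^2=\Bigl(\int_{\partial A_t}d\sigma_g\Bigr)^2\le\Bigl(\int_{\partial A_t}\lvert\nabla_g u\rvert_g\,d\sigma_g\Bigr)\Bigl(\int_{\partial A_t}\lvert\nabla_g u\rvert_g^{-1}\,d\sigma_g\Bigr)=4nbt\,\mu(t)\,\bigl(-\mu'(t)\bigr);
\]
the isoperimetric inequality of Theorem~\ref{Iso} (this is where Assumption~\ref{assumption} is used) gives
\[
\mathrm{per}_g(A_t)^2\ \ge\ 4n^2\,\mu(t)^{\frac{2n-1}{n}}+4n^2\,\mu(t)^2=4n^2\,\mu(t)\bigl(\mu(t)^{\frac{n-1}{n}}+\mu(t)\bigr);
\]
and comparing the two and cancelling $4n\,\mu(t)>0$ yields $n\bigl(\mu(t)^{\frac{n-1}{n}}+\mu(t)\bigr)\le bt\,\bigl(-\mu'(t)\bigr)$, i.e.\ $g'(t)\le 0$. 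This finishes the proof up to the measure-theoretic step of passing from the almost-everywhere inequality to the monotonicity of $g$.

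The step I expect to be the main obstacle is the exact boundary identity $\int_{\partial A_t}\lvert\nabla_g u\rvert_g\,d\sigma_g=4nbt\,\mu(t)$: it requires using in tandem that $A_t$ avoids the zero set of $f$ — so that $\log\lvert f\rvert$ is genuinely $g$-harmonic on $A_t$ and contributes nothing — and the precise normalization $\Delta_g\log(1-\lvert z\rvert^2)=-4n$, which is exactly what makes $4nbt\,\mu(t)$ cancel against the $4n^2$ of Theorem~\ref{Iso}. The accompanying points (Sard's theorem, differentiability of $\mu$ at regular values via the coarea formula, and the bounded-variation bookkeeping) are routine.
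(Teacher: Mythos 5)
Your proposal is correct and follows essentially the same route as the paper's own proof: coarea formula for $-\mu'(t)$, Cauchy--Schwarz on $\partial A_t$, the divergence theorem combined with the exact computation $\Delta_g\log u=-4nb$ on $A_t$ (using that $\log|f|$ is $\Delta_g$-harmonic away from the zeros of $f$ and $\Delta_g\log(1-|z|^2)=-4n$), and finally Theorem~\ref{Iso} to close the differential inequality $n\bigl(\mu^{(n-1)/n}+\mu\bigr)\le bt(-\mu'(t))$, i.e.\ $g'(t)\le 0$. The only difference is cosmetic: you record the boundary flux as the explicit identity $\int_{\partial A_t}|\nabla_g u|_g\,d\sigma_g=4nbt\,\mu(t)$ and spell out the Sard/regular-value and bounded-variation bookkeeping, which the paper leaves implicit.
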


\begin{proof}
By Coarea formula, for every Borel function $h:\mathbb{B}_n\rightarrow [0,+\infty)$, we have
\begin{equation*}
\int_{\mathbb{B}_n}h(z)\vert\nabla_g u\vert_g d v_g(z)=\int_0^{\max u}\left(\int_{u(z)=\kappa} h d\sigma_g\right)d\kappa,
\end{equation*}
where $d\sigma_g$ denotes the hyperbolic surface area measure on  $\partial A_\kappa=\{z\in\mathbb{B}_n: u(z)=\kappa\}$ induced by the Bergman metric on $\mathbb B_n$.

Set
$$h(z)=\chi_{A_t}(z)\vert\nabla_g u\vert_g^{-1}, $$
and then for a.e. $z$ we have
$$h(z)\vert\nabla_g u\vert_g=\chi_{A_t}(z).$$

Hence, it follows that
\begin{equation*}
\mu(t)=\int_{\mathbb{B}_n}\chi_{A_t}(z)d v_g(z)=\int_t^{\max u}\left(\int_{u(z)=\kappa}\vert\nabla_g u\vert_g^{-1} d\sigma_g\right)d\kappa.
\end{equation*}

Thus, we can deduce that
\begin{equation*}
-\mu'(t)=\int_{u(z)=t}\vert\nabla_g u\vert_g^{-1} d\sigma_g.
\end{equation*}

Our next step is to apply the Cauchy-Schwarz inequality to the hyperbolic surface area  of $\partial A_t$:
\begin{equation*}
[{\rm Area}_g(\partial A_t)]^2=\left(\int_{\partial A_t} d\sigma_g\right)^2\leq\left(\int_{\partial A_t}\vert\nabla_g u\vert_g^{-1} d\sigma_g\right)
\left(\int_{\partial A_t}\vert\nabla_g u\vert_g d\sigma_g\right).
\end{equation*}

Let $\nu$ be the outward normal to $\partial A_t$ with respect to the Bergman metric. Then we have
$\vert\nabla_g u\vert_g =-\langle \nabla_g u, \nu\rangle_g$. Thus, for $z\in\partial A_t$, we have
$$\frac{\vert\nabla_g u\vert_g }{t}=\frac{\vert\nabla_g u\vert_g }{u(z)}=-\frac{\langle \nabla_g u, \nu\rangle_g}{u}=-\langle \nabla_g\log u, \nu\rangle_g.$$

Then, we deduce that
\begin{equation*}
\begin{aligned}
[{\rm Area}_g(\partial A_t)]^2&\leq t\mu'(t)\int_{\partial A_t}\langle \nabla_g\log u, \nu\rangle_g d\sigma_g\\
&=t\mu'(t)\int_{A_t}\dive(\nabla_g\log u)d v_g\\
&=t\mu'(t)\int_{A_t}\triangle_g(\log u) dv_g.
\end{aligned}
\end{equation*}
Here $\triangle_g$ is the invariant Laplacian operator on $\mathbb{B}_n$, given by
$$\triangle_g=4(1-\vert z\vert^2)\sum_{i,j=1}^n(\delta_{ij}-z_i\bar z_j)\frac{\partial^2}{\partial z_i\partial \bar z_j}.$$

Note that $u(z)\neq 0$ for $z\in A_t$, and this means that $\log u(z)$ is well-defined on $A_t$ and $\partial A_t$, and directly computation shows that
$\triangle_g\log u(z)=-4nb$. Thus, we have
$$ [{\rm Area}_g(\partial A_t)]^2\leq -4nbt\mu(t)\mu'(t).$$
Together with the isoperimetric inequality established in Theorem \ref{Iso}, we obtain that for $t>0$
$$\frac{bt}{n}\mu'(t)\mu^{\frac{1}{n}-1}(t)+\mu^{\frac{1}{n}}(t)+1\leq 0.$$
By a direct calculation, $$g'(t)=\frac{t^{\frac{1}{b}-1}}{b}\left(\frac{bt}{n}\mu'(t)\mu^{\frac{1}{n}-1}(t)+\mu^{\frac{1}{n}}(t)+1\right)\leq 0.$$
Thus, $g(t)$ is decreasing.
\end{proof}
\section{Weak-Type Estimate for the Hardy Spaces}
In this section we are going to prove the existence of a  upper bound for the hyperbolic measure of the superlevel sets of functions in the Hardy spaces.

\begin{theorem}\label{bound}
With assumption \ref{assumption},
let $f\in H^{nr}$ with  $\|f\|_{H^{nr}}=1$. Set $u(z)=\vert f(z)\vert^r(1-\vert z\vert^2)$ and its distribution function $\mu(t)={\rm vol}_g(\{z\in\mathbb B_n: u(z)>t\})$. Then for all $t\in (0,\infty)$ we have
\begin{equation}\label{upper bound}
\mu(t)\leq\max\left\{\left(\frac{1}{t}-1\right)^n, 0\right\}.
\end{equation}
Equality in \eqref{upper bound} holds for all $0<t<\infty$ if $f(z)\equiv 1$.
\end{theorem}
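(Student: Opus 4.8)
The plan is to combine three ingredients: the pointwise bound recalled in the Introduction, the monotonicity of Theorem \ref{mon}, and the fact that the Hardy norm $\|\cdot\|_{H^{nr}}$ is the limit of the weighted Bergman norms $\|\cdot\|_{A_\alpha^{\alpha r}}$ as $\alpha\to n^+$. First I would dispose of the range $t\ge 1$. Since $\|f\|_{H^{nr}}=1$, the pointwise estimate $|f(z)|^r(1-|z|^2)\le\|f\|_{H^{nr}}^r$ gives $u(z)\le 1$ on $\mathbb{B}_n$, so the superlevel set $\{u>t\}$ is empty for $t\ge 1$, and in fact for every $t\ge t_0:=\max_{\mathbb{B}_n}u$, with $t_0\le 1$; hence $\mu(t)=0$ there and \eqref{upper bound} holds trivially. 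It remains to prove $\mu(t)\le(1/t-1)^n$ for $t\in(0,t_0)$. For this I would invoke Theorem \ref{mon} with $a=r$, $b=1$: $u$ is bounded and tends to $0$ uniformly at $\mathbb{S}_n$ (as noted in the Introduction), so $g(t):=t\bigl(\mu(t)^{1/n}+1\bigr)$ is decreasing on $(0,t_0)$. Since $1/t-1>0$ on $(0,t_0)$, the target bound is exactly equivalent to $g(t)\le 1$ on $(0,t_0)$.

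Arguing by contradiction, suppose $g(t_1)=c>1$ for some $t_1\in(0,t_0)$. By monotonicity $g(\tau)\ge c$, hence $\mu(\tau)^{1/n}\ge c/\tau-1$, for every $\tau\in(0,t_1]$. Fixing $c'\in(1,c)$ and $\delta:=\min\{t_1,\,c-c'\}$, we obtain the lower bound $\mu(\tau)\ge(c')^n\tau^{-n}$ for all $\tau\in(0,\delta]$. The goal is now to contradict this using the normalization $\|f\|_{H^{nr}}=1$.

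From the definition of $A_\alpha^{\alpha r}$ together with the identity $|f(z)|^{\alpha r}(1-|z|^2)^\alpha=u(z)^\alpha$, writing $\alpha=n+\varepsilon$ gives
\[
\|f\|_{A_{n+\varepsilon}^{(n+\varepsilon)r}}^{(n+\varepsilon)r}=c_{n+\varepsilon}\int_{\mathbb{B}_n}u(z)^{n+\varepsilon}\,dv_g(z),
\]
where each integral is finite because $f\in H^{nr}\subset A_{n+\varepsilon}^{(n+\varepsilon)r}$; moreover the left-hand side tends to $\|f\|_{H^{nr}}^{nr}=1$ as $\varepsilon\to0^+$. Since $c_{n+\varepsilon}=\Gamma(n+\varepsilon)/(n!\,\Gamma(\varepsilon))\sim\varepsilon/n$ as $\varepsilon\to0^+$ (equivalently $\varepsilon\,\Gamma(\varepsilon)\to1$), this reads $\varepsilon\int_{\mathbb{B}_n}u^{n+\varepsilon}\,dv_g\to n$. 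By the layer-cake (Cavalieri) formula, $\int_{\mathbb{B}_n}u^{n+\varepsilon}\,dv_g=(n+\varepsilon)\int_0^{t_0}\tau^{\,n+\varepsilon-1}\mu(\tau)\,d\tau$, and dividing by $n+\varepsilon\to n$ we get $\varepsilon\int_0^{t_0}\tau^{\,n+\varepsilon-1}\mu(\tau)\,d\tau\to 1$. On the other hand, the lower bound $\mu(\tau)\ge(c')^n\tau^{-n}$ on $(0,\delta]$ forces $\varepsilon\int_0^{t_0}\tau^{\,n+\varepsilon-1}\mu(\tau)\,d\tau\ge(c')^n\,\varepsilon\int_0^{\delta}\tau^{\varepsilon-1}\,d\tau=(c')^n\delta^{\varepsilon}\to(c')^n>1$ as $\varepsilon\to0^+$, a contradiction. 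Hence $g\le 1$ on $(0,t_0)$, i.e. \eqref{upper bound}. Finally, for $f\equiv 1$ one has $\{u>t\}=\{|z|<\sqrt{1-t}\}$, and using $\mathrm{vol}_g(B_g(0,\rho))=(\sinh\rho)^{2n}$ (as computed earlier in the paper) one finds $\mu(t)=\bigl((1-t)/t\bigr)^n$ for $0<t<1$ and $\mu(t)=0$ for $t\ge 1$, giving the asserted equality.

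I expect the only genuine obstacle to be the bookkeeping in the quantitative step: one must track the asymptotics $c_{n+\varepsilon}\sim\varepsilon/n$ to put the Bergman–Hardy limit into the normalized form $\varepsilon\int u^{n+\varepsilon}\,dv_g\to n$, and one must know $\int u^{n+\varepsilon}\,dv_g<\infty$ for each $\varepsilon>0$ — but the latter is immediate from the inclusion $H^{nr}\subset A_{n+\varepsilon}^{(n+\varepsilon)r}$ recorded in the Introduction. Everything else is driven by the monotonicity of $g$ from Theorem \ref{mon}, which upgrades this single limiting identity into the pointwise estimate for all $t$.
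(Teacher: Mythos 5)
Your proof is correct and follows the same strategy as the paper: assume $g(t_1)=c>1$ for some $t_1\in(0,t_0)$, use the monotonicity from Theorem \ref{mon} to derive the lower bound $\mu(\tau)^{1/n}\geq c/\tau-1$ for $\tau<t_1$, and then contradict $\|f\|_{H^{nr}}=1$ via the Bergman-to-Hardy limit $\|f\|_{A_\alpha^{\alpha r}}\to\|f\|_{H^{nr}}$. The only variation is in the final bookkeeping: you shrink $c$ to $c'<c$, use the asymptotics $c_{n+\varepsilon}\sim\varepsilon/n$, and exploit $\varepsilon\int_0^\delta\tau^{\varepsilon-1}d\tau=\delta^\varepsilon\to1$, whereas the paper invokes the exact normalization $k_\alpha\int_0^1(1/t-1)^n t^{\alpha-1}\,dt=1$ (from $\|1\|_{A_\alpha^{\alpha r}}=1$), splits it as $A(\alpha)+B(\alpha)$, and applies the algebraic bound $\frac{c/t-1}{1/t-1}\geq c$ to land directly on $1\geq c^n$ without sacrificing the constant.
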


\begin{proof}
Put $t_0=\max_{z\in\mathbb{B}_n}u(z)$. By the pointwise bound \eqref{hardy} we have $t_0\leq 1$. Clearly, for $t\geq t_0$, \eqref{upper bound} is trivial.

Assume that here exists some $0<t_1<t_0$ such that $\mu(t_1)^{\frac{1}{n}}>\frac{1}{t_1}-1$. Then $\mu(t_1)^{\frac{1}{n}}=\frac{c}{t_1}-1$ for some $c>1$.

Claim: for all $0<t<t_1$ we always have
$$\mu(t)^{\frac{1}{n}}\geq\frac{c}{t}-1.$$
In fact, by Theorem \ref{mon} with $a=r$ and $b=1$, we deduce that $g(t)=t\left(\mu(t)^{\frac{1}{n}}+1\right)$ is decreasing. Since $g(t_1)=c$, we obtain
$g(t)\geq c$ for $0<t<t_1$. Thus, it follows that $$\mu(t)^{\frac{1}{n}}\geq\frac{c}{t}-1.$$
Hence, we have
\begin{equation*}
\begin{aligned}
\Vert f\Vert_{A_\alpha^{r\alpha}}^{r\alpha}
&=\int_{\mathbb{B}_n}c_\alpha\vert f(z)\vert^{r\alpha}(1-\vert z\vert^2)^\alpha d v_g(z)\\
&=\int_0^{t_0}c_\alpha dt\int_{\partial A_t}u(z)^\alpha \vert\nabla_g u\vert_g^{-1} d\sigma_g=\int_0^{t_0}-c_\alpha t^\alpha \mu'(t) dt\\
&\geq \alpha c_\alpha \int_0^{t_1} \mu(t) t^{\alpha-1} dt\geq \alpha c_\alpha \int_0^{t_1} \left(\frac{c}{t}-1\right)^n t^{\alpha-1} dt.
\end{aligned}
\end{equation*}
Finally, we have
\begin{equation}\label{1}
\Vert f\Vert_{A_\alpha^{\alpha r}}^{\alpha r}\geq k_\alpha \int_0^{t_1} \left(\frac{c}{t}-1\right)^n t^{\alpha-1} dt,
\end{equation}
where $k_\alpha=(\alpha-n)\frac{\Gamma(\alpha+1)}{\Gamma(n+1)\Gamma(\alpha-n+1)}$.

On the other hand, we have
\begin{equation*}
\begin{aligned}
1&=k_\alpha\int_0^1 \left(\frac{1}{t}-1\right)^n t^{\alpha-1} dt\\
&=k_\alpha\int_0^{t_1} \left(\frac{1}{t}-1\right)^n t^{\alpha-1} dt+k_\alpha\int_{t_1}^1 \left(\frac{1}{t}-1\right)^n t^{\alpha-1} dt\\
&:=A(\alpha)+B(\alpha).
\end{aligned}
\end{equation*}

Since $k_\alpha\rightarrow 0$ as $\alpha\rightarrow n$, we see that $B(\alpha)\rightarrow 0$ as $\alpha\rightarrow n$. Hence, we deduce that
$A(\alpha)\rightarrow 1$ as $\alpha\rightarrow n$. On the other hand, we have
$$\frac{\frac{c}{t}-1}{\frac{1}{t}-1}=c+\frac{c-1}{\frac{1}{t}-1}\geq c.$$
Thus, together with \eqref{1}, we have that
\begin{equation*}
1=\Vert f\Vert_{H^{nr}}^{nr}=\lim_{\alpha\rightarrow n}\Vert f\Vert_{A_\alpha^{\alpha r}}^{\alpha r}\geq c^n\lim_{\alpha\rightarrow n} A(\alpha)=c^n>1,
\end{equation*}
which leads to a contradiction.
\end{proof}

\begin{proof}[Proof of Theorem \ref{increasing}]
Write $u(z)=|f(z)|^r(1-|z|^2)$ for $f\in H^{nr}$ with $\|f\|_{H^{nr}}=1$ and $\mu(t)={\rm vol}_g(\{z\in\mathbb B_n: u(z)>t\})$.
We shall assume that $\lim_{t\rightarrow 0^+}G(t)=0$ just as in the proof of Kulikov \cite{Kulikov}. Then this integral can be expressed via $\mu(t)$ as
\begin{equation*}
\begin{aligned}
\int_{\mathbb{B}_n}G\left(\vert f\vert^r(1-\vert z\vert^2)\right) dv_g(z)&=\int_0^{\infty}dt\int_{u(z)=t}G\left(u(z)\right)\vert\nabla_g u(z)\vert_g^{-1}d\sigma_g\\
&=\int_0^{+\infty} \mu(t) dG(t)\\
&\leq \int_0^{+\infty}\max\left\{\left(\frac{1}{t}-1\right)^n,0\right\} dG(t).
\end{aligned}
\end{equation*}
This last inequality is due to Theorem \ref{bound} and $dG(t)$ is a positive measure, as $G(t)$ is increasing. Also, we see that the maximal value for the integral will be taken for
$f(z)\equiv 1$.
\end{proof}

\section{Weighted Bergman space, Proof of Theorem \ref{convex}}

Assume that $\lim_{t\rightarrow 0^+}G(t)=0$, and the integral in \eqref{convex case} can be rewritten as
\begin{equation}\label{5.1}
\int_0^\infty \mu(t) G'(t)d t,
\end{equation}
where $\mu(t)=v_g(\{z\in\mathbb{B}_n: u(z)>t\})$ and $u(z)=\vert f(z)\vert^p(1-\vert z\vert^2)^\alpha$.
We also assume that $\Vert f\Vert_{A_\alpha^p}=1$, that is,
\begin{equation}\label{5.2}
\int_0^\infty \mu(t) dt=\frac{1}{c_\alpha}=\frac{n!\Gamma(\alpha-n)}{\Gamma(\alpha)}.
\end{equation}

Applying Theorem \ref{mon} to $f$ with $a=p$ and $b=\alpha$, we have
\begin{equation*}
\mu(t)^{\frac{1}{n}}=\frac{g(t)}{t^{1/\alpha}}-1
\end{equation*}
where $g$ is decreasing on $(0,t_0)$ with $t_0=\max_{z\in\mathbb{B}_n} u(z)$. By the pointwise bound \eqref{berg} we deduce
that $t_0\leq 1$. 

In order to maximize  \eqref{5.1} under the condition \eqref{5.2}, we need the following lemma due to Kalaj \cite{Kalaj}. 
\begin{lemma}\label{lem5.1}
Assume that $\Phi$, $\Psi$ are positive increasing functions and $g$ is a positive nonincreasing function such that
$$\int_0^{1}\Phi\left(\frac{g(t)}{t^{1/\alpha}}\right)dt=\int_0^{1}\Phi\left(\frac{1}{t^{1/\alpha}}\right)dt=c.$$
Then
$$\int_0^{1}\Phi\left(\frac{g(t)}{t^{1/\alpha}}\right)\Psi(t)dt\leq\int_0^{1}\Phi\left(\frac{1}{t^{1/\alpha}}\right)\Psi(t)dt.$$
\end{lemma}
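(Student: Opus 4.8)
The plan is to prove the inequality by a rearrangement-type argument, exploiting the fact that the two weights $\Phi(g(t)/t^{1/\alpha})$ and $\Phi(1/t^{1/\alpha})$ have the same total mass on $(0,1)$ while the second one is monotone. First I would set $F(t) = \Phi(g(t)/t^{1/\alpha})$ and $F_0(t) = \Phi(1/t^{1/\alpha})$, so that $\int_0^1 F = \int_0^1 F_0 = c$ and $F_0$ is \emph{decreasing} on $(0,1)$ (since $t\mapsto t^{-1/\alpha}$ is decreasing and $\Phi$ is increasing). The key structural observation is that the difference $D(t) := F(t) - F_0(t)$ changes sign in a controlled way: because $g$ is nonincreasing, the ratio $g(t)/t^{1/\alpha}$ relative to $1/t^{1/\alpha}$ — equivalently, whether $g(t) \gtrless 1$ — is governed by a single crossing point. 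More precisely, since $g$ is nonincreasing there is a threshold $t^* \in [0,1]$ with $g(t) \geq 1$ for $t < t^*$ and $g(t) \leq 1$ for $t > t^*$; hence $D(t) \geq 0$ on $(0,t^*)$ and $D(t) \leq 0$ on $(t^*,1)$, i.e. $D$ is a "one-sign-change, plus-then-minus" function.

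Granting this, the argument is the standard Hardy--Littlewood / Chebyshev-type manipulation. I would write
\begin{equation*}
\int_0^1 F(t)\Psi(t)\,dt - \int_0^1 F_0(t)\Psi(t)\,dt = \int_0^1 D(t)\Psi(t)\,dt = \int_0^1 D(t)\bigl(\Psi(t)-\Psi(t^*)\bigr)\,dt,
\end{equation*}
where the last equality uses $\int_0^1 D(t)\,dt = 0$. Now on $(0,t^*)$ we have $D(t)\geq 0$ and, since $\Psi$ is increasing, $\Psi(t)-\Psi(t^*)\leq 0$; on $(t^*,1)$ we have $D(t)\leq 0$ and $\Psi(t)-\Psi(t^*)\geq 0$. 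In both regions the integrand $D(t)(\Psi(t)-\Psi(t^*))$ is $\leq 0$, so the whole integral is $\leq 0$, which is exactly the claimed inequality.

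The main obstacle — the only place where care is genuinely needed — is justifying the single-sign-change structure of $D$, and in particular that the crossing of $g(t)/t^{1/\alpha}$ past $1/t^{1/\alpha}$ happens at one point. This reduces to the monotonicity of $g$: the set $\{t : g(t) > 1\}$ is an interval of the form $(0,t^*)$ (up to endpoint conventions and the possibility that it is empty or all of $(0,1)$, both of which make the conclusion trivial or immediate). One should also handle the degenerate cases $t^* = 0$ and $t^*=1$ separately (then $D$ has constant sign, and the mass-equality $\int_0^1 D = 0$ forces $D \equiv 0$, so equality holds), and note that all integrals are finite because $\Phi$, $\Psi$, $g$ are the specific functions arising from Theorem \ref{mon} and \eqref{berg}, so that $g$ is bounded and $\Phi(1/t^{1/\alpha})$ is integrable near $0$. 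With these points settled, the displayed chain of (in)equalities above gives the result.
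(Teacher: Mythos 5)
Your proof is correct and is essentially the same argument as the paper's: your crossing point $t^*$ is the paper's $a$, and writing $\int_0^1 D(t)(\Psi(t)-\Psi(t^*))\,dt\le 0$ is exactly the paper's observation that $h(t)=\bigl(\Phi(g(t)/t^{1/\alpha})-\Phi(1/t^{1/\alpha})\bigr)\bigl(\Psi(t)-\Psi(a)\bigr)\le 0$, combined with $\int_0^1 D=0$. The extra discussion of degenerate endpoints and integrability is fine but not needed.
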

\begin{proof}
 This is merely a variation of Lemma 4.1 in Kalaj \cite{Kalaj}, but for the convenience of the readers, we will provide a complete proof here.  Choose $a\in[0,1]$ such that $g(t)\geq 1$ for $t\leq a$ and $g(t)\leq 1$ for $t\geq a$. Then 
 $$h(t):=\left(\Phi\left(\frac{g(t)}{t^{1/\alpha}}\right)-\Phi\left(\frac{1}{t^{1/\alpha}}\right)\right)\left(\Psi(t)-\Psi(a)\right)\leq 0$$
for all $t\in[0,1]$. By integrating $h(t)$ for $t\in(0, 1)$ we get
\begin{equation*}
 \begin{aligned}
 &\int_0^{1}\Phi\left(\frac{1}{t^{1/\alpha}}\right)
 \Psi(a)-\Phi\left(\frac{g(t)}{t^{1/\alpha}}\right)\Psi(a)-\Phi\left(\frac{1}{t^{1/\alpha}}\right)\Psi(t)+\Phi\left(\frac{g(t)}{t^{1/\alpha}}\right)\Psi(t)dt\\
 =&\Psi(a)\int_0^{1}\left(\Phi\left(\frac{g(t)}{t^{1/\alpha}}\right)-\Phi\left(\frac{1}{t^{1/\alpha}}\right)\right)dt+\int_0^{1}\Psi(t)\left(\Phi\left(\frac{g(t)}{t^{1/\alpha}}\right)-\Phi\left(\frac{1}{t^{1/\alpha}}\right)\right)dt
 \leq 0.
 \end{aligned} 
\end{equation*}
 Since 
 $$\int_0^{1}\left(\Phi\left(\frac{g(t)}{t^{1/\alpha}}\right)-\Phi\left(\frac{1}{t^{1/\alpha}}\right)\right)dt=0,$$
 it follows that 
 $$\int_0^{1}\Phi\left(\frac{g(t)}{t^{1/\alpha}}\right)\Psi(t)dt\leq\int_0^{1}\Phi\left(\frac{1}{t^{1/\alpha}}\right)\Psi(t)dt.$$
\end{proof}
Now let $\Phi(x)=(x-1)^n$ with $x\geq 1$, and $\Psi(t)=G'(t)$. Thus, we have that the maximum of \eqref{5.1} under the condition \eqref{5.2} is attained when $g(t) \equiv 1$ , which implies that $f \equiv 1 $. Therefore, we finish the proof of Theorem \ref{convex}.

\bigskip

\begin{center}Acknowledgments\end{center}
The authors thanks Prof. Congwen Liu for providing the reference \cite{Mc03} and his interest in this work. The first author was partially supported by the National Natural Science Foundation of China (12361131577, 12271411). The second author was partially supported by the National Natural Science Foundation of China (12361131577).


\addcontentsline{toc}{section}{References}
\phantomsection
\renewcommand\refname{References}

\end{document}